\theoremstyle{definition}
\newtheorem{definition}{Definition}[section]
\newtheorem{ex}[definition]{Example}
\newtheorem{rem}[definition]{Remark}
\theoremstyle{plain}
\newtheorem{prop}[definition]{Proposition}
\newtheorem{coro}[definition]{Corollary}
\newtheorem{teo}[definition]{Theorem}
\newfont{\bbb}{msbm10 scaled\magstephalf}     
\def\reg{\operatorname{reg}}
\def\sing{\operatorname{sing}}
\title{Curvature loci of 3-manifolds}
\author{P. Benedini Riul, R. Oset Sinha AND M. A. S. Ruas}
\date{}
\address{Universidade Federal de S\~ao Jo\~ao del Rei - UFSJ, Campus Alto Paraopeba, Rodovia MG 443 - KM 7, CEP: 36497-899 - Ouro Branco - MG, Brazil}
\email{benedini@ufsj.edu.br}
\address{Departament de Matem\`{a}tiques,
Universitat de Val\`encia, Campus de Burjassot, 46100 Burjassot,
Spain}
\email{raul.oset@uv.es}
\address{Instituto de Ci\^encias Matem\'aticas e de Computa\c{c}\~ao - USP,
Av. Trabalhador s\~ao-carlense, 400 - Centro,
CEP: 13566-590 - S\~ao Carlos - SP, Brazil}
\email{maasruas@icmc.usp.br}
\thanks{Work of R. Oset Sinha partially supported by Grant PGC2018-094889-B-100 funded by MCIN/AEI/ 10.13039/501100011033 and by ``ERDF A way of making Europe"}
\thanks{Work of M.A.S.Ruas partially  supported by FAPESP Proc. 2019/21181-0 and CNPq Proc. 305695/2019-3}
\subjclass[2000]{Primary 57R45; Secondary 53A07, 58K05} \keywords{curvature locus, 3-manifolds, real nets of quadrics, Steiner Roman surface, second order geometry}
\begin{document}

\begin{abstract}
We refine the affine classification of real nets of quadrics in order to obtain generic curvature loci of regular $3$-manifolds in $\mathbb{R}^6$ and singular corank $1$ $3$-manifolds in $\mathbb{R}^5$. For this, we characterize the type of the curvature locus by the number and type of solutions of a system of equations given by 4 ternary cubics (which is a determinantal variety in some cases). We also study how singularities of the curvature locus of a regular 3-manifold can go to infinity when the manifold is projected orthogonally in a tangent direction.
\end{abstract}

\maketitle

\section{Introduction}

The extrinsic geometry of 3-manifolds in $\mathbb R^6$ from a singularity theory viewpoint was  investigated by M.C. Romero Fuster, R. Binotto
and S. Costa in \cite{Carmen3var, Carmen3var2} and by D. Dreibelbis in \cite{dreibelbis}. In the article  \cite{Carmen3var}, the authors 
 study the behaviour of the curvature locus (also called curvature Veronese) of a 3-manifold immersed in $\mathbb R^n , n \geq 5,$ describing the possible topological types at the different points of a 3-manifold immersed in $\mathbb R^6.$  Their work was motivated by \cite{computeiros}, in which quadratically 
 parameterizable surfaces, of which the Steiner surface is an example, have been studied in terms of their feasibility as surface patches in computer aided geometric design. 
 
Motivated by the results in \cite{Carmen3var}, the first and third authors, and A. Sacramento, 
introduced  in \cite{BenediniRuasSacramento},  the invariants of the second fundamental form of corank 1  \, 3-manifolds in $\mathbb R^5,$ and studied properties of its second order geometry in connection with the geometry of  regular 3-manifolds in $\mathbb R^6.$ Notice that the image of  the projection of a 3-dimensional smooth manifold  into a hyperplane in $\mathbb R^6$ along a tangent direction at a point $p,$ gives (locally) a corank 1 3-manifold in a 5-dimensional space. The converse is also true, in other words, given a parametrized corank 1  3-manifold in $\mathbb R^5,$   defined in a sufficiently small neighbourhood of a point, there exists a 3-dimensional smooth surface in $\mathbb R^6$ projecting onto it (See  also \cite{benediniosetpm}).

The aim of this paper is to complement these previous investigations, by  presenting a more formal approach to the  affine classification of the curvature locus
of smooth and singular 3-manifolds in $\mathbb R^n,\,$ $n=6$ and $5,$ respectively.

The affine geometry of the second fundamental form of a $3$-dimensional manifold in $\mathbb R^6$ is the study of invariants
of the action of the affine group $\mathcal{G}=GL(3)\times GL(3)$ in the space of quadratic polynomial mappings $g:\mathbb{R}^3\rightarrow\mathbb{R}^3.$ Each such mapping generates a system of quadrics denominated a ``net of quadrics". One can find in the literature many texts about nets of quadrics  
(see  \cite{wall/duplesis,wall,wall1,wall2}).

We refine the affine classification of real nets of quadrics in order to obtain generic curvature loci of regular $3$-manifolds in $\mathbb{R}^6$ and singular corank $1$ $3$-manifolds in $\mathbb{R}^5$. For this, we characterize the type of the curvature locus by the number and type of solutions of a system of equations given by 4 ternary cubics (which is a determinantal variety in some cases). We also study how singularities of the curvature locus of a regular 3-manifold can go to infinity when the manifold is projected orthogonally in a tangent direction.

The paper is organized as follows. In Sections 2.1 and 2.2, we review the definitions of first and second fundamental forms and the notion of curvature loci of regular and singular 3-dimensional
manifolds in $\mathbb R^n, \, n\geq 4.$  In Section 2.3, we present the affine classification of real nets of quadrics given by S. A. Edwards and C.T.C. Wall 
in \cite{wall}. From this section on, we restrict the discussion of the paper to 3-dimensional regular and singular manifolds in $\mathbb R^n,\, n=6$ and $5,$ respectively. The aim in Section 3, is to characterize  the curvature locus, when this set is a substantial surface, that is, a surface with
Gaussian curvature non identically zero. The main result in this section, is Theorem \ref{class1}, in which we prove that the singular sets of the restriction of the second fundamental form to the unit sphere in $\mathbb R^3$ are a complete set of invariants for the substantial curvature loci. 
The orthogonal projection of a regular 3-manifold in $\mathbb R^6$ into $\mathbb R^5$ along a tangent direction is a corank 1  3-manifold; their second fundamental form are the same, however their curvature loci are not equal. We discuss in Section 4 the relation between these two invariants.
The goal in Section 5 is  to prove Theorem \ref{genlocus}, in which we  classify the generic curvature locus of  each $\mathcal G$-orbit of a quadratic map 
$f: \mathbb R^3 \to \mathbb R^3.$


\section{Preliminary results}

\subsection{Second order geometry of $3$-manifolds in $\mathbb{R}^n$}

Given a smooth $3$-dimensional manifold $M^{3}_{\reg}\subset\mathbb{R}^{N}$, $N>3$ and $f:U\rightarrow\mathbb{R}^{N}$
a local parametrisation of $M^{3}_{\reg}$ with $U\subset\mathbb{R}^{3}$ an open subset, let
$\{e_{1},\ldots,e_{N}\}$ be an orthonormal frame of $\mathbb{R}^{N}$ such that at any $u\in U$,
$\{e_{1}(u),e_2(u),e_{3}(u)\}$ is a basis for $T_{p}M^{3}_{\reg}$ and $\{e_{4}(u),\ldots,e_{N}(u)\}$ is a basis for
$N_{p}M^{3}_{\reg}$ at $p=f(u)$.

The second fundamental form of $M^{3}_{\reg}$ at a point $p$ is a symmetric bilinear map,
$II_{p}:T_{p}M^{3}_{\reg}\times T_pM^{3}_{\reg}\rightarrow N_{p}M^{3}_{\reg}$ given by
$II_p(v,w)=\pi_2(d^2f(v,w))$, where $\pi_2:T_p\mathbb R^N\rightarrow N_pM^3_{\reg}$ is the canonical projection.

Furthermore, the second fundamental form of $M^{3}_{\reg}$ at $p$ along a normal vector field $\nu$
is the bilinear map $II_{p}^{\nu}:T_pM^{3}_{\reg}\times T_pM^{3}_{\reg}\rightarrow\mathbb{R}$ given by
$II_{p}^{\nu}(v,w)=\langle \nu,II_p(v,w)\rangle$.

For singular $3$-dimensional manifolds with corank $1$ singularities, we shall need the following construction.
Let $M^{3}_{\sing}$ be a corank $1$ $3$-manifold in $\mathbb{R}^{N}$, $N>3$, and consider a point $p\in M^{3}_{\sing}$. The singular manifold $M^{3}_{\sing}$ will be taken as the image of a smooth map $g:\tilde{M}\rightarrow \mathbb{R}^{N}$, where $\tilde{M}$ is a smooth regular $3$-dimensional manifold and $q\in\tilde{M}$ is a corank $1$ point of $g$ such that $g(q)=p$. Also, consider $\phi:U\rightarrow\mathbb{R}^{3}$ a local coordinate system defined in an open neighbourhood $U$ of $q$ at $\tilde{M}$. Hence, we may consider a local parametrisation $f=g\circ\phi^{-1}$ of $M^3_{\sing}$ at $p$ (see the diagram below).

$$
\xymatrix{
\mathbb{R}^{3}\ar@/_0.7cm/[rr]^-{f} & U\subset\tilde{M}\ar[r]^-{g}\ar[l]_-{\phi} & M^{3}_{\sing}\subset\mathbb{R}^{N}
}
$$

At the singular point $p,$ the $2$-dimensional tangent space  $T_{p}M^{3}_{\sing}$, is given by $\mbox{Im}\ dg_{q}$, where $dg_{q}:T_{q}\tilde{M}\rightarrow T_{p}\mathbb{R}^{N}$ is the differential map of $g$ at $q$. Thus, the $(N-2)$-dimensional normal space of $M^{3}_{\sing}$ at $p$, $N_{p}M^{3}_{\sing}$, is the subspace orthogonal to $T_{p}M^{3}_{\sing}$ satisfying $T_{p}M^{3}_{\sing}\oplus N_{p}M^{3}_{\sing}=T_{p}\mathbb{R}^{N}$.

The first fundamental form of $M^{3}_{\sing}$ at $p$, $I:T_{q}\tilde{M}\times T_{q}\tilde{M}\rightarrow \mathbb{R}$ is given by
$$I(u,v)=\langle dg_{q}(u),dg_{q}(v)\rangle,\ \ \forall\ u,v\in T_{q}\tilde{M}.$$

The first fundamental form is not a Riemannian metric on $T_{q}\tilde{M}$, but a pseudometric instead.
Let $(x_1,x_2,x_3)$ be the Cartesian coordinate system in $\mathbb{R}^3$.
Taking the frame $\mathcal{B}=\{\partial x_1,\partial x_2,\partial x_3\}$ of $T_{q}\tilde{M}$, the coefficients of the first fundamental form of $M^3_{\sing}$ at $p$ with respect to $\phi$ are given by $E_{x_ix_j}(q)=I(\partial_{x_i},\partial_{x_j})=\langle f_{x_i},f_{x_j}\rangle(\phi(q))$, $1\leq i,j\leq 3$, where $f_{x_i}=\frac{\partial f}{\partial x_i}$.

Consider the orthogonal projection $\pi_2:T_{p}\mathbb{R}^{N}\rightarrow N_{p}M^{3}_{\sing}$. The second fundamental form of $M^{3}_{\sing}$ at $p$, $II:T_{q}\tilde{M}\times T_{q}\tilde{M}\rightarrow N_{p}M^3_{\sing}$ in the basis $\mathcal{B}$ of $T_{q}\tilde{M}$ is given by $II(\partial_{x_i},\partial_{x_j})=\pi_2(f_{x_{i}x_{j}}(\phi(q)))$, $1\leq i,j\leq 3$
and we extend it to the whole space in a unique way as a symmetric bilinear map.
Given a normal vector $\nu\in N_{p}M^{3}_{\sing}$, we define the second fundamental form along $\nu$, $II_{\nu}:T_{q}\tilde{M}\times T_{q}\tilde{M}\rightarrow\mathbb{R}$ given by $II_{\nu}(u,v)=\langle II(u,v),\nu\rangle$, for all $u,v\in T_{q}\tilde{M}$.

\subsection{The curvature loci}

Given a $k$-dimensional manifold $M^{k}\subset\mathbb{R}^{N}$, $N>k$, the curvature
locus at a point $p\in M^{k}$ is the set $\{II_p(u,u):\ u\in T_pM^k, I(u,u)^{\frac{1}{2}}=1\}\subset N_{p}M^{k}$. The curvature locus of a manifold contains all its second order geometry. Any isometric scalar invariant of the curvature locus is an isometric scalar invariant of the manifold since rotations in $T_pM^k$ leave invariant the locus and rotations in $N_pM^k$ rotate the locus. The contact geometry of the manifold is affine invariant. The curvature locus is not affine invariant, but the position with respect to the origin and, in some cases, the topological type are affine invariant.

\textbf{\underline {Regular case}:}

For a regular manifold $M^{3}_{\reg}\subset\mathbb{R}^{N}$, the curvature locus is also the image of the map
$\eta:\mathbb{S}^{2}\subset T_{p}M^{3}_{\reg}\rightarrow N_{p}M^{3}_{\reg}$, where
$\eta(u)=II_p(u,u)$ and is denoted by $\Delta_{v}$. The authors show in
\cite[p. 27]{Carmen3var} that
taking spherical coordinates in $\mathbb{S}^2\subset T_pM^{3}_{\reg}$, one can parametrise the curvature locus of $M^{3}_{\reg}$
at $p$ by
$\eta:\mathbb{S}^{2}\subset T_pM^{3}_{\reg}\rightarrow N_pM^{3}_{\reg},\ (\theta,\phi)\mapsto\eta(\theta,\phi)$,
where
$$
\begin{array}{cl}
    \eta(\theta,\phi)= & H+(1+3\cos(2\phi))B_1+\cos(2\theta)\sin^2\theta B_2 \\
     & +\sin(2\theta)\sin^2\phi B_3+cos\theta\sin(2\phi)B_4+\sin\theta\sin(2\phi)B_5
\end{array}
$$
with
$$
\begin{array}{c}
     H=\frac{1}{3}(f_{xx}+f_{yy}+f_{zz}),\ B_1=\frac{1}{12}(-f_{xx}-f_{yy}+2f_{zz}),  \\
     B_2=\frac{1}{2}(f_{xx}-f_{yy}),\ B_3=f_{xy},\ B_4=f_{xz},\ B_5=f_{yz}.
\end{array}
$$

The first normal space
is $N_p^1M^{3}_{\reg}=\langle H,B_1,B_2,B_3,B_4,B_5\rangle_{(p)}$. The affine hull of the curvature locus is denoted by $Aff_p$ and
the linear subspace of $N_p^1M^{3}_{\reg}$ parallel to $Aff_p$ by $E_p$.
The curvature locus of a regular $3$-manifold in $\mathbb{R}^{N}$ can be seen as the image of the
classical Veronese surface of order $2$ via a convenient linear map.

In \cite[p. 34]{Carmen3var}, it is shown that
the curvature locus at a point $p$ where $\dim (N_p^1M^{3}_{\reg})=3$ in a $3$-manifold $M^{3}_{\reg}\subset\mathbb{R}^6$ is isomorphic to one of the following: a Roman Steiner surface (Figure \ref{fig:steiner1}), a Cross-Cap surface (Figure \ref{fig:cross-cap}), a Steiner surface of type $5$ (Figure \ref{fig:tipo5}), a Cross-Cup or type 6 surface (Figure \ref{fig:cross-cup}), an ellipsoid, a (compact) cone or a planar region. The curvature locus at $p$ is said to be substantial if $\dim(E_p)=3$.

\begin{center}
\begin{figure}
\begin{minipage}[b]{0.45\linewidth} \hspace{1.4cm}
\includegraphics[scale=0.35]{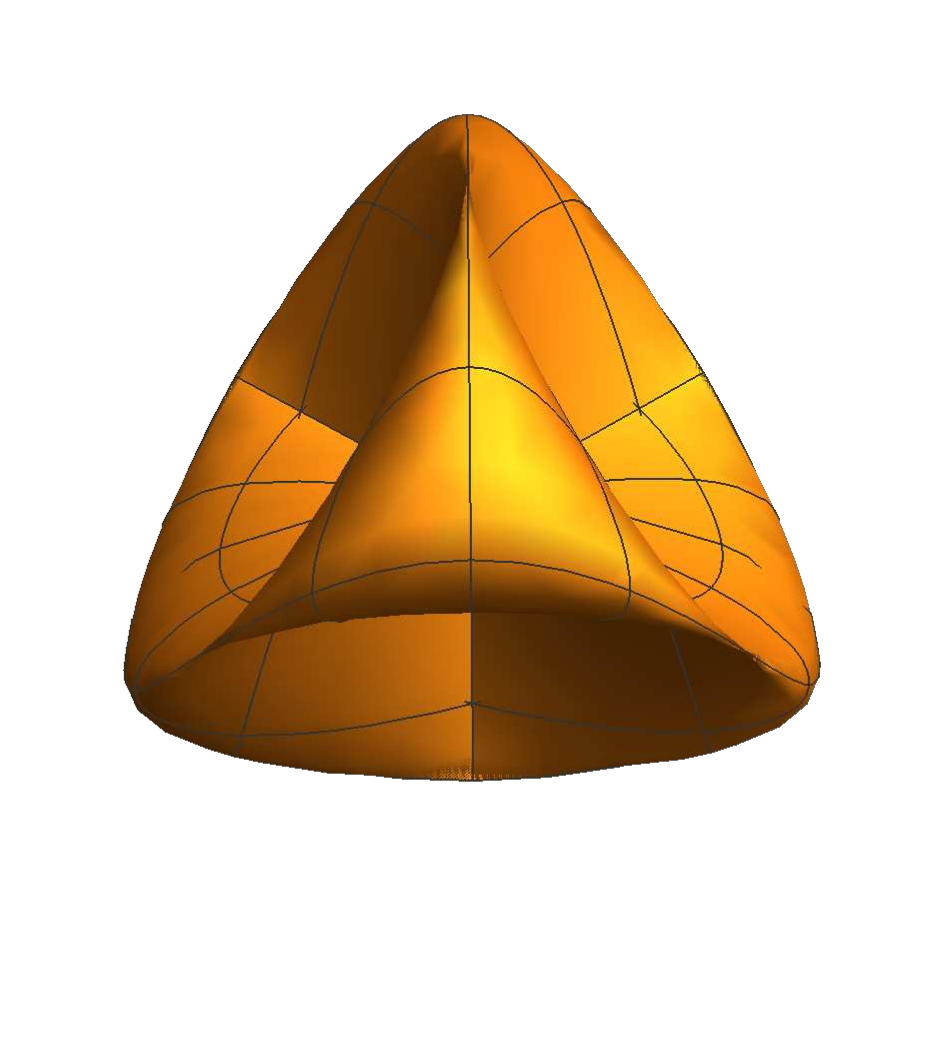}
\caption{Roman Steiner.}
\label{fig:steiner1}
\end{minipage} \hfill
\begin{minipage}[b]{0.45\linewidth} \hspace{1.4cm}
\includegraphics[scale=0.35]{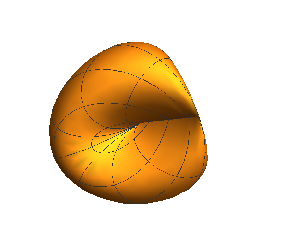}
\caption{Cross-cap.}
\label{fig:cross-cap}
\end{minipage} \hfill \\
\begin{minipage}[b]{0.45\linewidth} \hspace{1cm}
\includegraphics[scale=0.35]{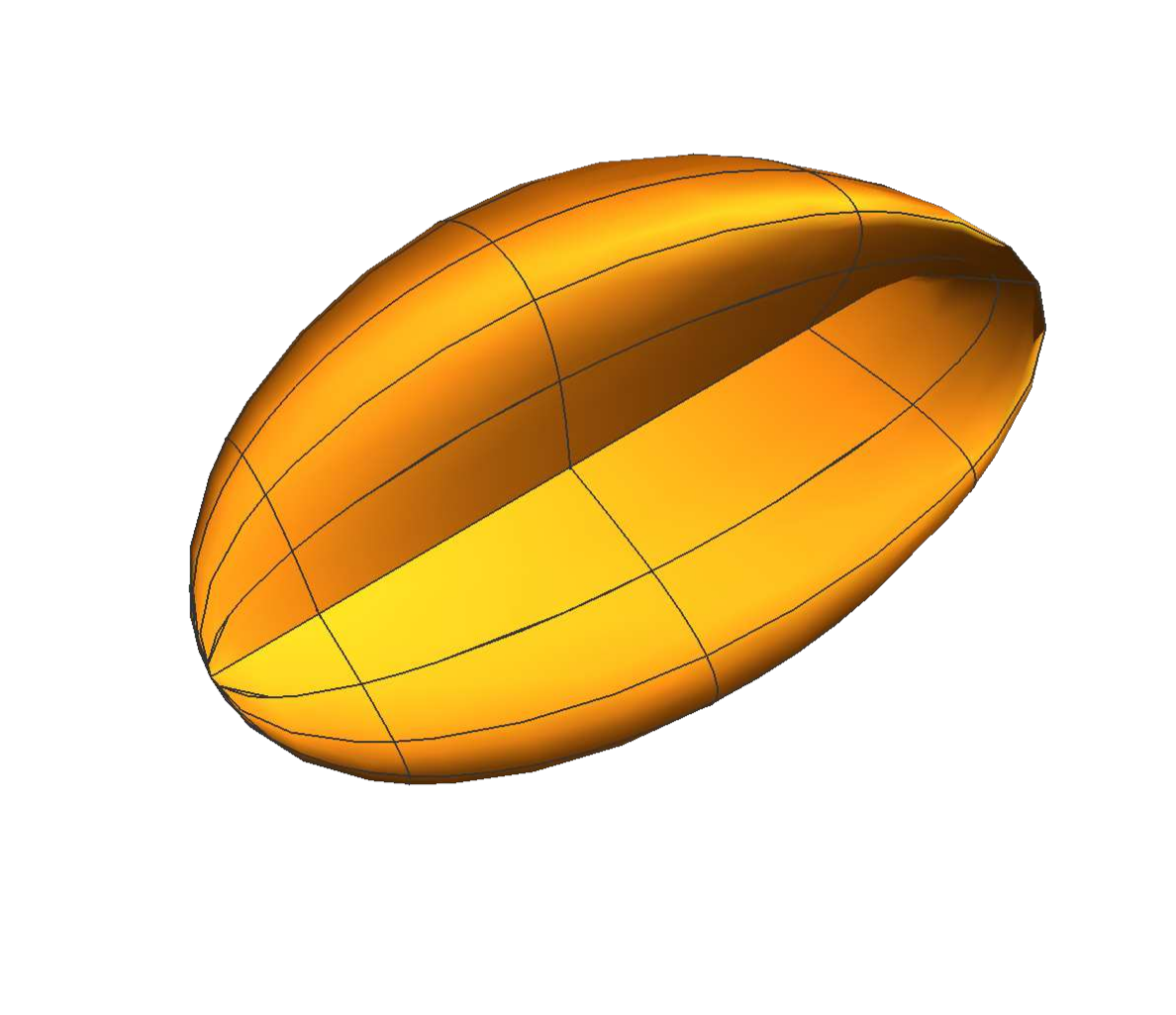}
\caption{Steiner Type $5$.}
\label{fig:tipo5}
\end{minipage} \hfill
\begin{minipage}[b]{0.45\linewidth} \hspace{1.5cm}
\includegraphics[scale=0.35]{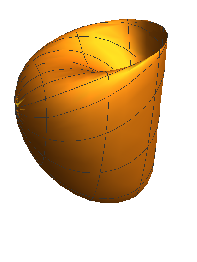}
\caption{Cross-cup.}
\label{fig:cross-cup}
\end{minipage} \hfill
\end{figure}
\end{center}

\textbf{\underline {Singular case}:}

The curvature locus at a singular corank $1$ point $p$ of a $3$-dimensional manifold $M^{3}_{\sing}\subset\mathbb{R}^N$, $N>3$ (denoted by $\Delta_{cv}$) is also given by the image of the map $\eta:C_{q}\rightarrow N_{p}M$ defined by $\eta(u)=II(u,u)$, where $C_{q}\subset T_{q}\tilde{M}$ is the subset of unit tangent vectors (i.e. vectors $u\in T_q\tilde{M}$ such that $I(u,u)^{\frac{1}{2}}=1$).

Examples of topological types of $\Delta_{cv}$ for the case $N=5$ can be found in \cite{BenediniRuasSacramento}.

It is possible to take a coordinate system $\phi$ and make rotations in the target in order to obtain a local parmetrisation for $M^{3}_{\sing}$ at $p$ given by
$$f(x,y,z)=(x,y,f_{3}(x,y,z),\ldots,f_{N}(x,y,z)),$$
where $\frac{\partial f_{i}}{\partial x}(\phi(q))=\frac{\partial f_{i}}{\partial y}(\phi(q))=\frac{\partial f_{i}}{\partial z}(\phi(q))=0$ for $i=3,\ldots,N$. Hence the subset of unit tangent vectors $C_q\in T_q\tilde{M}$ is the cylinder given by
$$C_q=\{(a,b,c)\in T_q\tilde{M}:\ a^2+b^2=1\}.$$ Taking an orthonormal frame $\{\nu_{1},\ldots,\nu_{N-2}\}$ of $N_{p}M^3_{\sing}$, the curvature locus $\Delta_{p}$ can be parametrised by
    $$\eta(a,b,c)=\sum_{i=1}^{N-2}(a^2 l_{\nu_i}+2abm_{\nu_i}+b^2n_{\nu_i}+c^2p_{\nu_i}+2acq_{\nu_i}+2bcr_{\nu_i})\nu_i,$$
where $a^2+b^2=1$,
$$\begin{array}{ccc}
    l_{\nu_i}(q)=\langle \pi_2(f_{xx}),\nu_i\rangle,\,\, & m_{\nu_i}(q)= \langle \pi_2(f_{xy}),\nu_i\rangle,\,\,  & n_{\nu_i}(q)= \langle \pi_2(f_{yy}),\nu_i\rangle,  \\
    p_{\nu_i}(q)= \langle \pi_2(f_{zz}),\nu_i\rangle,\,\, & q_{\nu_i}(q)= \langle \pi_2(f_{xz}),\nu_i\rangle,\,\,  & r_{\nu_i}(q)= \langle \pi_2(f_{yz}),\nu_i\rangle,
  \end{array}
$$
are the coefficients of the second fundamental form with all the partial derivatives evaluated at $\phi(q)$, and $(x,y,z)$ are local coordinates of $\mathbb R^3$.

From now on we will focus in the cases of $M^3_{\reg}\subset \mathbb R^6$ and $M^3_{\sing}\subset \mathbb R^5$. Notice that, locally (i.e. in neighbourhoods of the corresponding points), the latter can be obtained by an orthogonal projection in a tangent direction of the former. Notice too that the second fundamental forms coincide.

For these cases, the definition of asymptotic direction is the following. A unit tangent direction $u$ is an asymptotic direction if there exists a unit normal vector $\nu$ such that $II_{\nu}(u,w)=0$, for any tangent direction $w$. For more details, see Definitions 2.3 and 5.1 in \cite{dreibelbis} and \cite{benediniosetpm}, respectively.

\subsection{Real nets of quadrics}\label{subsec:nets}

The affine geometry of the second fundamental form of a $3$-dimensional manifold in $\mathbb R^6$ is the study of invariants
of the action of the affine group $\mathcal{G}=GL(3)\times GL(3)$ in the space of quadratic polynomial mappings $Q:\mathbb{R}^3\rightarrow\mathbb{R}^3$. Each
such mapping generates a system of quadrics denominated a ``net of quadrics". One can find in the literature many texts about nets of quadrics (see for instance
\cite{wall/duplesis,wall,wall1,wall2}).
In \cite{wall/duplesis} is presented a classification of the real nets of quadrics with respect to $\mathcal{K}$-equivalence (see \cite{mather} or \cite{wall3} for the definition of the contact group and its action).

Let $H^2(3)$ be the space of homogeneous polynomials of degree 2 in 3 variables. A \emph{net of quadrics} is a system in $H^2(3)$ generated by 3 polynomials $q_1,q_2,q_3$, where $q_i\in H^2(3)$ for $i=1,2,3$. Associated to each net there is a map germ $Q:(\mathbb{R}^3,0)\rightarrow (\mathbb{R}^3,0)$, $Q=(q_1,q_2,q_3)$. Denote by $\langle Q \rangle=\langle q_1,q_2,q_3\rangle$ the net given by $Q=(q_1,q_2,q_3)$, that is, $\langle Q\rangle=\{\lambda q_1+\mu q_2+\nu q_3\,|\,\lambda,\mu,\nu\in\mathbb{R}\}$. Let $\Gamma$ be the set of all nets $\langle Q\rangle$. It follows from \cite{wall} that there is a Zariski open set of $\Gamma$, denoted by $\Gamma_0$, such that any net $\langle Q\rangle$ in $\Gamma_0$ can be taken in the form:
\begin{equation}\label{eq:net1}
\lambda(2xz+y^2)+\mu( 2yz)+\nu(-x^2-2gy^2+cz^2+2gxz),\,\,\, c(c+ 9g^2) \neq 0\,\,\, (\mbox{see\,\, \cite{wall4}}).
\end{equation}
For quadratic polynomial mappings $f:\mathbb{C}^3\rightarrow\mathbb{C}^3$, this normal form is equivalent to the Hessian form
\begin{equation}\label{eq:net2}
\lambda(x^2+2cyz)+\mu(y^2+2cxz)+\nu(z^2+2cxy), \,\,\,\,\,c(c^3-1)(8c^3+1)\neq 0.
\end{equation}

A net in this set is called a general real net of quadrics. As the normal forms of the generic nets are given by homogeneous polynomial maps of degree 2, and in this case, the corresponding map germ $Q=(q_1,q_2,q_3)$ is 2-determined with respect to $\mathcal{K}$-equivalence, it follows that the $\mathcal{K}$-classification coincides with
the classification by the action of the group $\mathcal{G}=GL(3)\times GL(3)$.

The complete classification of quadratic mappings $Q=(q_1, q_2, q_3)$ with respect to $\mathcal{G}=GL(3)\times GL(3)$-equivalence can be found
in \cite[p. 315]{wall/duplesis}. The family (\ref{eq:net1}) is labelled $A$, $B$ and $C$ according to the values of the parameters $c$ and $g$.
Table \ref{caseAB} presents the types $A$, $B$ and its subcases. Type $C$ is given by $c=g=0$, and the discriminant for cases $A,B$ and $C$ is
$\Delta=-\lambda^2\nu+(\lambda-2g\nu)(\lambda^2+2g\lambda\nu+(c+g^2)\nu^2)$.

The orbits in the complex case are labelled as follows:
\newline

\begin{tabular}{lccccccccccccccc}
  Name & $A$ & $B$ & $B^*$ & $C$ & $D$ & $D^*$ & $E$ & $E^*$ & $F$ & $F^*$ & $G$ & $G^*$ & $H$ & $I$ & $I^*$ \\
  Codimension & 0 & 1 & 1 & 2 & 2 & 2 & 3 & 3 & 3 & 3 & 4 & 4 & 5 & 7 & 7 \\
\end{tabular}
\newline

The type $A$ depends on a modulus and in the real case it splits four subcases. Type $B$, $B^*$, $D$, $D^*$, $E$, $E^*$, $F$ and $F^*$ also have subcases.
\newline

\begin{table}[h]
\caption{Orbits $A$ and $B$}
\centering
{\begin{tabular}{cccccc}
  \hline
  $c<-9g^2$ & $c=-9g^2$ & $-9g^2<c<0$ & $c=0$ & $c>0$ &  \\
   & $B_c$ & $A_c$ & $B_a^*$ &  & $g>0$  \\
  $A_b$ &  &  &  & $A_d$ &  \\
   & $B_a$ & $A_a$ & $B_c^*$ &  & $g<0$ \\
  \hline
\end{tabular}
}
\label{caseAB}
\end{table}

The remaining cases are shown in Table \ref{other types}, along with their respective discriminants.

\begin{table}[h]
\caption{Other orbits}
\centering
{\begin{tabular}{ccc}
\hline
Name & Normal form & discriminant\\
\hline
$D_a$ &  $\langle x^2,y^2,z^2+2xy\rangle$ & $\nu(\lambda\mu-\nu^2)$ \\
$D_b,\ D_c$ & $\langle x^2-y^2,2xy,x^2\pm z^2\rangle$ & $\nu(\lambda^2+\lambda\nu+\mu^2)$ \\
$D_a^*$ & $\langle 2xz,2yz,z^2+2xy\rangle$ & $\nu(2\lambda\mu-\nu^2)$\\
$D_b^*,\ D_c^*$ & $\langle 2xz,2yz,x^2+y^2\mp z^2 \rangle$ & $\nu(\lambda^2+\mu^2\pm\nu^2)$\\
$E_a,\ E_b$ & $\langle x^2\pm y^2,2xy,z^2 \rangle$ & $\nu(\lambda^2\mp \mu^2)$ \\
$E_a^*,\ E_b^*$ & $\langle x^2\mp y^2,2xz,2yz \rangle$ & $\lambda(\mu^2\pm\nu^2)$  \\
$F_a,\ F_b$ & $\langle x^2\pm y^2,2xy,2yz \rangle$ & $\lambda\nu^2$  \\
$F_a^*,\ F_b^*$ & $\langle x^2\mp y^2,2xz,z^2 \rangle$ &  $\lambda(\lambda\nu-\mu^2)$ \\
$G$ &  $\langle x^2,y^2,2yz \rangle$ & $\lambda\nu^2$ \\
$G^*$ & $\langle 2xy,2xz,z^2 \rangle$ & $\lambda^2\nu$ \\
$H$ & $\langle x^2,2xy,y^2+2xz \rangle$ & $\nu^3$ \\
$I$ & $\langle x^2,2xy,y^2 \rangle$ & $0$ \\
$I^*$ & $\langle 2xz,2yz,z^2 \rangle$  & $0$ \\
\hline
\end{tabular}
}
\label{other types}
\end{table}



\section{Curvature loci for regular $3$-manifolds}

The curvature locus is the image under a homogeneous quadratic map of the unitary tangent directions in $T_pM^3_{\reg}$, namely the image of $\eta:\mathbb{S}^{2}\subset T_{p}M^{3}_{\reg}\rightarrow N_{p}M^{3}_{\reg}\simeq \mathbb R^3$.
The group $\mathcal A=\mathcal R\times \mathcal L,$ where $\mathcal R=\{ h: \mathbb{S}^{2} \to \mathbb{S}^{2},\,\, h \, \text{diffeomorphism}\}$ and
$\mathcal L=\{ k: \mathbb{R}^{3} \to \mathbb{R}^{3},\,\, k \, \text{diffeomorphism}\}$ acts on the space of maps $\eta : \mathbb{S}^{2} \to \mathbb{R}^{3},$
with the Whitney topology. If two maps $\eta_1$ and $\eta_2$ are $\mathcal A$-equivalent, their images are diffeomorphic. In a neighbourhood of a point in $\mathbb S^2,$ one can choose coordinates such that $\eta$  can locally be seen as a map from $\mathbb R^2$ to $\mathbb R^3$. From the point of view of $\mathcal A$-equivalence the only stable singularity (stable under small perturbations) is the cross-cap ($CC$), for which we have a standard normal form given by $(x,y)\mapsto (x,y^2,xy).$

At a singular point $p,$ we say that $\eta : (\mathbb R^2,p) \to (\mathbb R^3, \eta(p))$ is  finitely determined if there exists a positive integer $k $ such that
 for any
 $\eta' : (\mathbb R^2,p) \to (\mathbb R^3, \eta(p)),$ with $j^k\eta(p)=j^k\eta'(p),$ it follows that $\eta$ and $\eta'$ are $\mathcal A$-equivalent at $p.$
 By the Mather-Gaffney criterion, a singularity $\eta : (\mathbb R^2,p) \to (\mathbb R^3, \eta(p))$ is not finitely determined if it is not stable outside the point $p$ (see \cite{libroNunoMond}).

In \cite{computeiros}  the projective classification of quadratically parametrizable surfaces is given. Out of the surfaces studied there the compact ones are the possibilities for curvature loci of regular 3-manifolds, as pointed out in \cite{Carmen3var}. The description of these surfaces, given in \cite{computeiros} in a different terminology, is the following:

\begin{itemize}
\item[(i)] The Roman Steiner surfaces has 6 cross-cap singularities ($CC$-points) joined in pairs by 3 transversal double point curves which intersect at a triple point.
\item[(ii)] The Cross-cap surface has 2 cross-cap singularities joined by a transversal double point curve. From the geometrical point of view, generically one of these cross-caps is elliptic and the other one hyperbolic (see \cite{BallesterosTari} or \cite{OsetSinhaTari}).
\item[(iii)] The type 5 surface has 2 cross-caps joined by a transversal double point curve. One of these cross-caps lies on a tangent double point curve. The tangency of this double point curve is non-degenerate (the tangent sheets have different curvature) and the two end points of this curve are non finitely determined singularities. We shall call the two end points of the tangent double point curve $TCC$-points (for tangent cross-cap).
\item[(iv)] The type 6 surface has no cross-caps. It has a tangent double point curve where the tangency is degenerate (the tangent sheets have the same curvature). The two end points of this curve are non finitely determined singularities. We shall call the two end points of the tangent double point curve $DTCC$-points (for degenerate tangent cross-cap).
\end{itemize}

Besides these, amongst the non-planar loci we have

\begin{itemize}
\item[(v)] The truncated cone is a compact cone with a curve of singular points corresponding to the base of the cone and a singular point corresponding to the vertex of the cone. None of these singularities are finitely determined.
\item[(vi)] The ellipsoid has a curve of non finitely determined singular points.
\end{itemize}

From this classification it follows that the number and type of singularities of quadratically parametrised surfaces determine completely its equivalence class. It is also clear that generically one would expect to obtain a Roman Steiner surface or a Cross-cap surface. Besides this, following \cite{computeiros, Reid}, the $TCC$-points and the $DTCC$-points (i.e. the non-finitely determined singularities of types 5 and 6) can be distinguished by the multiplicity of the double line in a neighbourhood of them, which is determined by intersecting the double line with transverse
planes and resolving the singularity of the resulting curve by blowing up. We say that a self intersection is of multiplicity 1 if the two intersecting sheets
are transverse, of multiplicity 2 if the sheets are tangent but have
different curvature and of multiplicity 3 if the sheets are tangent and have the same
curvature. With this notation, the cross-cap has multiplicity 1, the $TCC$-points have multiplicity 2 and the $DTCC$-points have multiplicity 3.

The map $\eta$ is the restriction of the second fundamental form $II$ to $\mathbb{S}^2=\rho^{-1}(0)$, where $\rho(x,y,z)=x^2+y^2+z^2-1$ and $x,y,z$ are the local coordinates of $T_pM^3_{\reg}$ and of $\mathbb R^3$. The singularities of this restriction are captured by the zeros of the 3x3 minors of the following determinantal matrix. Given a local parametrisation in Monge form $f(x,y,z)=(x,y,z,f_1(x,y,z),f_2(x,y,z),f_3(x,y,z))$ ($f_1,f_2,f_3$ have zero linear and constant part) of $M^{3}_{\reg}$ where $p$ is the origin, define the matrix
$$M_f=\left(
  \begin{array}{ccc}
    \frac{\partial j^2f_1}{\partial x} & \frac{\partial j^2f_1}{\partial y} & \frac{\partial j^2f_1}{\partial z} \\
    \frac{\partial j^2f_2}{\partial x} & \frac{\partial j^2f_2}{\partial y} & \frac{\partial j^2f_2}{\partial z} \\
    \frac{\partial j^2f_1}{\partial x} & \frac{\partial j^2f_1}{\partial y} & \frac{\partial j^2f_1}{\partial z} \\
    \frac{\partial \rho}{\partial x} & \frac{\partial \rho}{\partial y} & \frac{\partial \rho}{\partial z} \\
  \end{array}
\right).$$
The zeros of the 3x3 minors of this matrix, $V(M_f)$, are the solutions to a system of 4 homogeneous polynomials of degree 3 in 3 variables, i.e. 4 ternary cubics. The first one of these equations is the determinant of the 3x3 minor given by the 3 first rows of $M_f$ and is the determinant of the Jacobian matrix of the second fundamental form, which we call $\delta$. This first equation is precisely the equation to obtain the asymptotic directions (see \cite{dreibelbis}). We call $\delta_i$, $i=1,2,3$ the determinant of the remaining 3 minors, where $i$ is the row of $M_f$ removed. The solution to this system is a homogeneous algebraic variety which is generically a collection of lines passing through the origin. These solutions have a certain multiplicity defined by the number of points of the intersection of the line with a generic plane away from the origin. The intersection of these lines with $\mathbb{S}^2$ correspond to the singularities of the curvature locus, so we have the following characterisation.

\begin{teo}\label{class1}
Suppose $M^{3}_{\reg}$ is parametrised by $f$ in Monge form and suppose that it has a substantial (i.e. non-planar) curvature locus $\Delta_v$. Then
\begin{itemize}
\item[(i)] $\Delta_v$ is a Roman Steiner surface if and only if $V(M_f)$ is 6 real lines of multiplicity 1.
\item[(ii)] $\Delta_v$ is a Cross-cap surface if and only if $V(M_f)$ is 2 real lines and 4 complex lines of multiplicity 1.
\item[(iii)] $\Delta_v$ is a type 5 surface if and only if $V(M_f)$ is 4 real lines, two of them with multiplicity 2 and 2 with multiplicity 1.
\item[(iv)] $\Delta_v$ is a type 6 surface if and only if $V(M_f)$ is 2 real lines of multiplicity 3 each.
\item[(v)] $\Delta_v$ is a truncated cone if and only if $V(M_f)$ has a plane and a real line as the only real solutions and (possibly) some complex solutions.
\item[(vi)] $\Delta_v$ is an ellipsoid if and only if $V(M_f)$ has a plane as the only real solution and (possibly) some complex solutions.
\end{itemize}
\end{teo}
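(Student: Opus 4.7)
The plan is to combine the topological description of the six substantial curvature loci (Roman Steiner, Cross-cap, type 5, type 6, truncated cone and ellipsoid) recalled at the beginning of the section with a detailed analysis of the determinantal scheme $V(M_f)$. The starting observation is that a point $u\in\mathbb{S}^2$ is a singular point of $\eta=II|_{\mathbb{S}^2}$ precisely when $M_f(u)$ has rank at most $2$, i.e.\ when $u\in V(M_f)$, and that since $\eta(-u)=\eta(u)$ each line $\ell\subset V(M_f)$ contributes exactly one singular point $\eta(\ell\cap\mathbb{S}^2)$ on the image surface.

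First I would establish the global structure of $V(M_f)$. Because $II$ is quadratic, $M_f$ is a $4\times 3$ matrix of linear forms, and the rank $\leq 2$ locus is a generically $1$-dimensional determinantal subvariety of $\mathbb{R}^3$. The Porteous formula applied to the bundle map $\mathcal{O}_{\mathbb{P}^2}^{3}\to\mathcal{O}_{\mathbb{P}^2}(1)^{4}$ identifies its expected class with $c_2(F-E)=6h^2$, so in the $1$-dimensional case $V(M_f)$ consists of six lines through the origin, counted with scheme-theoretic multiplicity. For the degenerate orbits where $V(M_f)$ has excess dimension, inspection of the normal forms in Section~\ref{subsec:nets} shows that a $2$-dimensional component can only be a plane.

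The core of the proof is a multiplicity--multiplicity correspondence: the scheme-theoretic multiplicity of a line $\ell\subset V(M_f)$ coincides with the multiplicity of the double-point curve of the image surface at the corresponding singular point $\eta(\ell\cap\mathbb{S}^2)$. The three admissible local models were recalled before the statement, giving multiplicity $1$ at an ordinary cross-cap, $2$ at a $TCC$-point and $3$ at a $DTCC$-point. I would verify this correspondence orbit by orbit: for each $\mathcal{G}$-orbit in Tables~\ref{caseAB} and~\ref{other types} whose image is substantial, I would take the normal form of the net, compute $M_f$ together with its $3\times 3$ minors, read off the lines of $V(M_f)$ with their multiplicities, and match them against the contact type of the double-point curve described in \cite{computeiros, Reid, Carmen3var}. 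Combined with the degree bound ``sum of multiplicities equals $6$'', this forces the exact decompositions claimed in (i)--(iv): $6=6\cdot 1$ for Roman Steiner, $6=2\cdot 1+4\cdot 1$ with four lines complex conjugate for the cross-cap surface, $6=2\cdot 2+2\cdot 1$ for type $5$, and $6=2\cdot 3$ for type $6$.

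Cases (v) and (vi) are handled separately, since there $\eta$ has a curve of singular points rather than isolated ones and $V(M_f)$ acquires a $2$-dimensional component which, as noted, must be a plane. The truncated cone carries an additional vertex singularity which contributes one extra real isolated line in $V(M_f)$, while the ellipsoid has none; any remaining components are complex and invisible in the real picture. I expect the main obstacle to be the rigorous justification of the multiplicity--multiplicity correspondence at the non-finitely-determined $TCC$ and $DTCC$ points: near such a singularity $\eta$ is not determined by any finite jet, so matching the length of the local ring of $V(M_f)$ along $\ell$ with the intrinsic order of tangency of the double-point curve requires either a direct computation in the normal forms or an argument based on $\mathcal{G}$-invariance of both invariants together with their common upper bound $3$.
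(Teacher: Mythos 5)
Your proposal is correct and its skeleton coincides with the paper's: both rest on (a) the degree bound ``sum of multiplicities of the components of $V(M_f)$ equals $6$'' for the codimension $2$ determinantal scheme, (b) a dictionary multiplicity $1 \leftrightarrow CC$, $2\leftrightarrow TCC$, $3\leftrightarrow DTCC$, and (c) the fact that the number and type of singularities determine the equivalence class of a quadratically parametrised surface, so that the counting in (a)+(b) forces exactly the decompositions (i)--(iv), with (v) and (vi) set aside because the mixed-dimensional $V(M_f)$ is no longer determinantal. Where you genuinely diverge is in how the two key inputs are justified. For the degree $6$ you invoke the Thom--Porteous class $c_2(\mathcal{O}(1)^4-\mathcal{O}^3)=6h^2$, whereas the paper cites the multiplicity of the ideal $\langle\delta,\delta_1,\delta_2,\delta_3\rangle$ of a Cohen--Macaulay codimension $2$ homogeneous singularity (Lemma 5.5 of \cite{bivianuno}); these are two faces of the same computation and either suffices. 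The more substantive difference is at the point you correctly flag as the main obstacle: the multiplicity of a line of $V(M_f)$ versus the order of tangency of the double-point curve at a non-finitely-determined singularity. You propose an orbit-by-orbit verification on the normal forms of Tables \ref{caseAB} and \ref{other types} (or a $\mathcal{G}$-invariance argument), which is finite and would work --- indeed the paper carries out exactly such computations in the examples following the theorem. The paper instead settles it by conservation of number under deformation: a $TCC$-point perturbs into two cross-caps and a $DTCC$-point into three, so by conservation of the local intersection multiplicity of the determinantal scheme these points must carry multiplicities $2$ and $3$ respectively. The deformation argument is shorter and explains \emph{why} the dictionary holds rather than verifying it case by case, but it quietly uses that the length of the local ring is preserved under flat deformation of the Cohen--Macaulay scheme; your case-by-case route avoids that subtlety at the cost of more computation. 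Either way the conclusion is the same, and your treatment of (v)--(vi) (a plane component plus, for the cone, one extra real line for the vertex) is consistent with, and slightly more explicit than, the paper's remark that multiplicity is simply not defined there.
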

\begin{proof}
This follows from the description of the types of locus and their singularities and the relation between the multiplicity of the singular points in the locus defined above and the multiplicity of the solutions in $V(M_f)$. Items i) to iv) are a consequence of the following fact: the complexification of $M_f$ is the representation matrix of a codimension 2 Cohen-Macaulay determinantal singularity in $\mathbb C^3$ (see \cite{F-KN}, \cite{FK}). The equations defining the minors of $M_f$ are homogeneous of degree 3,
so the ideal defined by $\delta, \delta_1, \delta_2, \delta_3$ has multiplicity 6 (see Lemma 5.5 in \cite{bivianuno}) (i.e. the homogeneous curve intersects in 6 points a plane away from the origin) and the sum of the multiplicities of the solutions to the system must be 6. A solution to this system is generically a real line with multiplicity 1. On the other hand, in the curvature locus a singularity is generically a cross-cap. This means that the real lines of multiplicity 1 correspond to $CC$-points, i.e. the multiplicity of the solution to the system coincides with the multiplicity defined for $CC$-points.

Now, if we deform a $TCC$-point, two $CC$-points appear generically. By conservation of number, this means that the $TCC$-points correspond to solutions of multiplicity 2. Similarly, the $DTCC$-points deform into three $CC$-points, and so $DTCC$-points correspond to real lines of multiplicity 3 as solution to the system. Taking into account the multiplicities of the singular points for each type of locus and that the sum of the multiplicities must be 6, the only possibilities are as in the statement.

For items (v) and (vi) $V(M_f)$ is not a determinantal variety since there are mixed dimensions and the multiplicity is not well defined.
\end{proof}

\begin{rem}
There is a natural equivalence when working with singularities of matrices. The group $\bar{\mathcal G}=\mathcal R\times\mathcal H$ where $\mathcal R$ is changes of coordinates in the source and $\mathcal H=GL(4)\times GL(3)$ acts naturally by multiplication to the left and to the right on the $4\times 3$ matrices. If two matrices $A$ and $B$ are $\bar{\mathcal{G}}$-equivalent, then $V(A)$ and $V(B)$ are isomorphic, where $V$ stands for the zeroes of the $3\times3$ minors. This means that inside a certain $\bar{\mathcal G}$-orbit, the affine type of curvature locus of the associated 3-manifold does not change.

However, two parametrisations $f,g$ of regular 3-manifolds whose associated matrices $M_f$ and $M_g$ are $\bar{\mathcal G}$-equivalent, may have associated nets of quadrics in different affine $\mathcal G$-orbits, as examples (iii) and (iv) below show.
\end{rem}

\begin{ex}
Consider $M^{3}_{\reg}\subset\mathbb{R}^6$ given by $f$, a local parametrisation at the origin $p$ in a Monge form, as before.
\begin{itemize}
    \item[(i)] Let $G(x,y,z)=(2xy,2xz,z^2)$ be its second fundamental form at $p$. The 3x3 minors of $M_f$ are $\delta=-8xz^2$, $\delta_1=-8yz^2$, $\delta_2=-8z(y^2-x^2)$ and $\delta_3=8x(x^2-z^2-y^2)$. Hence, the solution $V(M_f)$ of $\delta=\delta_i=0$, $i=1,2,3$ is given by $\{(\pm y,y,0)\}\cup\{(0,y,0)\}\cup\{(0,0,z)\}$, $y,z\in\mathbb{R}$. Take the solution $\{(0,0,z)\}$, and consider the plane $\{z=1\}$, which is transverse to the line. The algebraic multiplicity of this line is given by replacing $z$ by 1 and evaluating in $(x,y)=(0,0)$ the dimension of the local algebra $\dim_{\mathbb C}\frac{\mathcal O_3}{\langle \delta,\delta_1,\delta_2,\delta_3\rangle}$, where $\mathcal{O}_3$ is the local ring of functions in 3 variables. Hence, we obtain multiplicity 1 in this case. Similarly, the first two solutions have multiplicity 2 and correspond to $TCC$-points and the last two solutions correspond to the cross-cap points. Here $\Delta_v$ is a type 5 surface.
    \item[(ii)] Now, let $f$ be such that the second fundamental form at $p$ is given by $G(x,y,z)=(x^2,2xy,y^2+2xz)$. For this case, $\delta=8x^3$,
    $\delta_1=8(z-x)(y^2-x^2-xz)$, $\delta_2=8xy(z-x)$ and $\delta_3=8x^2z$. There are two solutions for $\delta=\delta_i=0$, $i=1,2,3$: $\{(0,y,0)\}\cup\{(0,0,z)\}$, $y,z\in\mathbb{R}$. Both of these solutions have multiplicity 3 and correspond to $DTCC$-points. Here $\Delta_v$ is a type 6 surface.
    \item[(iii)] Taking $G(x,y,z)=(x^2,2xy,y^2)$ as the second fundamental form at $p$, $\delta=0$, $\delta_1=8y^2z$, $\delta_2=8xyz$ and $\delta_3=8x^2z$. Thus, the solution of $\delta=\delta_i=0$, $i=1,2,3$, is $\{(x,y,0)\}\cup\{(0,0,z)\}$, $x,y,z\in\mathbb{R}$. Notice that the solution is a plane and a line. The curvature locus is a truncated cone. The associated net of quadrics lies in orbit $I$ from Table \ref{other types}.
    \item[(iv)] Finally, let $G(x,y,z)=(-x^2-y^2+2z^2,\frac{1}{2}x^2-\frac{1}{2}y^2,xz)$. For this example, $\delta=4y(x^2+z^2)$, $\delta_1=2y(z^2-2x^2)$, $\delta_2=12yz^2$ and $\delta_3=24xyz$. Therefore $V(M_f)=\{(x,0,z)\}\cup\{(0,y,0)\}$, $x,y,z\in\mathbb{R}$, that is,
    a plane and a line and $\Delta_v$ is also a truncated cone. Here the associated net of quadrics lies in orbit $F_a^*$ from Table \ref{other types}.

\end{itemize}

\end{ex}

\begin{rem}
As a corollary of this theorem we get certain implications about the solutions of the system of 4 ternary cubics described above. For example, the fact that there is always a real solution means that at least one of the four minors must be a reducible equation. We can also see that the complex solutions cannot have multiplicity 2, as that would imply that one of the minors is a degree 4 polynomial (complex solutions come together with their conjugate solutions) and therefore would not be a cubic.
\end{rem}

This characterisation of the topological types of non-planar curvature loci allows us to obtain sufficient geometrical conditions in order to have a certain topological type.

\begin{prop}
Let the curvature locus of $M^{3}_{\reg}$
at $p$ be parametrised by
$\eta:\mathbb{S}^{2}\subset T_pM^{3}_{\reg}\rightarrow N_pM^{3}_{\reg},\ (\theta,\phi)\mapsto\eta(\theta,\phi)$,
where
$$
\begin{array}{cl}
    \eta(\theta,\phi)= & H+(1+3\cos(2\phi))B_1+\cos(2\theta)\sin^2\theta B_2 \\
     & +\sin(2\theta)\sin^2\phi B_3+cos\theta\sin(2\phi)B_4+\sin\theta\sin(2\phi)B_5
\end{array}
$$
with
$$
\begin{array}{c}
     H=\frac{1}{3}(f_{xx}+f_{yy}+f_{zz}),\ B_1=\frac{1}{12}(-f_{xx}-f_{yy}+2f_{zz}),  \\
     B_2=\frac{1}{2}(f_{xx}-f_{yy}),\ B_3=f_{xy},\ B_4=f_{xz},\ B_5=f_{yz}.
\end{array}
$$
If $H=B_1=B_2=(0,0,0)$ (i.e. $f_{xx}=f_{yy}=f_{zz}=\bar{0}$) and $B_3,B_4,B_5$ are linearly independent, then the curvature locus is a Roman Steiner surface.
\end{prop}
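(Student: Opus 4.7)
The plan is to directly identify the parametrization $\eta$ with the classical Roman Steiner parametrization. Under the hypothesis $H=B_1=B_2=\bar 0$, the formula for $\eta(\theta,\phi)$ collapses to
$$\eta(\theta,\phi) = \sin(2\theta)\sin^2\phi\, B_3 + \cos\theta\sin(2\phi)\, B_4 + \sin\theta\sin(2\phi)\, B_5.$$
Switching to Cartesian coordinates on $\mathbb S^2\subset T_pM^3_{\reg}$ via $x=\sin\phi\cos\theta$, $y=\sin\phi\sin\theta$, $z=\cos\phi$, the three trigonometric coefficients become $2xy$, $2xz$ and $2yz$ respectively, so
$$\eta(x,y,z) = 2xy\, B_3 + 2xz\, B_4 + 2yz\, B_5.$$
This is precisely the classical Steiner map $(x,y,z)\mapsto(xy,xz,yz)$ post-composed with the linear map $\mathbb R^3\to N_p^1M^3_{\reg}$ sending $(a,b,c)\mapsto aB_3+bB_4+cB_5$.

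The second step is to use the linear independence hypothesis. Since $B_3,B_4,B_5$ are linearly independent, this linear map is an isomorphism onto the three-dimensional space $N_p^1M^3_{\reg}\cong\mathbb R^3$. Hence $\Delta_v$ is the affine image of the classical Roman Steiner surface in $\mathbb R^3$, and by the observation in Section 2.2 that the topological type of the curvature locus is affine invariant, $\Delta_v$ is itself a Roman Steiner surface.

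As a consistency check, one can alternatively apply Theorem \ref{class1}(i) by computing the four $3\times 3$ minors of $M_f$ when $j^2f_1=2xy,\ j^2f_2=2xz,\ j^2f_3=2yz$. These are proportional to $xyz$, $z(x^2+y^2-z^2)$, $y(y^2-x^2-z^2)$ and $x(y^2+z^2-x^2)$. A short case analysis (first ruling out common zeros with $xyz\neq 0$ by adding the last three equations, then in each coordinate hyperplane $x=0$, $y=0$, $z=0$ identifying the two remaining bisector lines) produces exactly six real lines through the origin, each of multiplicity one, matching Theorem \ref{class1}(i).

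There is no real obstacle here: the direct route is essentially an identification of formulas, and the alternative via the minors is a minor bookkeeping exercise. The only point worth emphasising is that the linear independence of $B_3,B_4,B_5$ is precisely the nondegeneracy condition that prevents the image of $\eta$ from collapsing onto a lower dimensional subvariety, thereby ensuring that $\Delta_v$ is substantial and realises the full Roman Steiner topological type rather than a degenerate projection of it.
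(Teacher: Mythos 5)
Your proof is correct, but your primary argument is genuinely different from the paper's. The paper proves this proposition entirely through Theorem \ref{class1}: it keeps $B_3,B_4,B_5$ as general vectors with $R=\det(B_3,B_4,B_5)\neq 0$, computes $\delta=-2Rxyz$, and then, on each coordinate plane (say $x=0$), shows that $\delta_1,\delta_2,\delta_3$ all factor as $2(z^2-y^2)$ times linear forms whose coefficients are $2\times 2$ minors of $(B_3,B_4,B_5)$, not all of which can vanish; this yields the six real lines $y=\pm z$, $x=\pm z$, $x=\pm y$ of multiplicity one. Your main route instead observes that $\eta=L\circ Q_0|_{\mathbb S^2}$ where $Q_0(x,y,z)=(2xy,2xz,2yz)$ is the classical Steiner map and $L$ is the linear isomorphism sending the standard basis to $B_3,B_4,B_5$; since $L$ acts only on the target (it does not distort the source sphere, which is where affine non-invariance would enter), the image is linearly equivalent to the classical Roman Steiner surface and in particular has its 6 cross-caps, 3 double curves and triple point. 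This is cleaner and more conceptual than the paper's computation, and it makes transparent why linear independence is exactly the right hypothesis. Your secondary ``consistency check'' is essentially the paper's argument specialised to the normalised net $(2xy,2xz,2yz)$; to have it cover the general case one should add that $M_f=\operatorname{diag}(L,1)\cdot M_{f_0}$, so the ideal of $3\times 3$ minors is unchanged (this is precisely the $\bar{\mathcal G}$-invariance noted in the paper's remark after Theorem \ref{class1}). Two trivial bookkeeping points: ``adding the last three equations'' does not by itself rule out common zeros with $xyz\neq 0$ (the sum of the quadratic factors is $x^2-3y^2+z^2$); one should instead add or subtract them pairwise, e.g.\ $(z^2-x^2-y^2)+(x^2-y^2-z^2)=-2y^2$ forces $y=0$. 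Neither point affects the validity of your main argument.
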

\begin{proof}
Let $B_3=(b_1,b_2,b_3)$, $B_4=(c_1,c_2,c_3)$ and $B_4=(r_1,r_2,r_3)$. Call $R=\det(B_3,B_4,B_5)$. We have $\delta=-2Rxyz$. Since $R\neq 0$, to calculate $V(M_F)$ $x,y$ or $z$ must be zero. Suppose $x=0$, then $$\delta_1=2(z^2-y^2)(\left|
                                                                                                                                                                   \begin{array}{cc}
                                                                                                                                                                     b_1 & r_1 \\
                                                                                                                                                                     b_2 & r_2 \\
                                                                                                                                                                   \end{array}
                                                                                                                                                                 \right|y+\left|
                                                                                                                                                                   \begin{array}{cc}
                                                                                                                                                                     c_1 & r_1 \\
                                                                                                                                                                     c_2 & r_2 \\
                                                                                                                                                                   \end{array}
                                                                                                                                                                 \right|z)$$
$$\delta_2=2(z^2-y^2)(\left|\begin{array}{cc}
                                                                                                                                                                     b_1 & r_1 \\
                                                                                                                                                                     b_3 & r_3 \\
                                                                                                                                                                   \end{array}
                                                                                                                                                                 \right|y+\left|
                                                                                                                                                                   \begin{array}{cc}
                                                                                                                                                                     c_1 & r_1 \\
                                                                                                                                                                     c_3 & r_3 \\
                                                                                                                                                                   \end{array}
                                                                                                                                                                 \right|z)$$
$$\delta_3=2(z^2-y^2)(\left|\begin{array}{cc}
                                                                                                                                                                     b_2 & r_2 \\
                                                                                                                                                                     b_3 & r_3 \\
                                                                                                                                                                   \end{array}
                                                                                                                                                                 \right|y+\left|
                                                                                                                                                                   \begin{array}{cc}
                                                                                                                                                                     c_2 & r_2 \\
                                                                                                                                                                     c_3 & r_3 \\
                                                                                                                                                                   \end{array}
                                                                                                                                                                 \right|z)$$
Notice that $R\neq 0$ implies that at most 3 2x2 minors can have 0 determinant, so at most one of the above equations vanishes. Therefore, the only solution different from $x=y=z=0$ is $y=\pm z$.

Similarly when $y=0$ or $z=0$. Therefore $V(M_f)$ is 6 real lines whose intersection with $\mathbb{S}^2$ is the 6 cross-caps of the Roman Steiner surface.
\end{proof}

\section{Curvature loci for singular 3-manifolds}

Projecting orthogonally a regular 3-manifold in $\mathbb R^6$ along a tangent direction yields a singular 3-manifold in $\mathbb R^5$. The second fundamental form is the same in both cases. However, the curvature locus is different since the unitary tangent vectors form an $\mathbb{S}^2$ in the regular case and the cylinder $C_q$ in the singular case.

In \cite[p. 402]{benediniosetpm} asymptotic directions for $M^3_{\sing}\subset\mathbb R^5$ are defined and studied, in particular, the authors show that a direction is asymptotic if and only if the determinant of the Jacobian of the second fundamental form vanishes. They also show that, similarly to the regular case (\cite[p. 451]{dreibelbis}), the singular points of the curvature locus correspond to the image of asymptotic directions by the second fundamental form. Furthermore, they show that when you project orthogonally a regular 3-manifold in $\mathbb R^6$ along an asymptotic direction this direction becomes an infinite asymptotic direction of the singular 3-manifold in $\mathbb R^5$. This suggests that when you consider $M^3_{\sing}\subset\mathbb R^5$ seen as the projection of $M^3_{\reg}\subset\mathbb R^6$, the number of singularities of the curvature locus will depend on whether the direction of projection is asymptotic or not. Our goal in this section is to understand this situation.

Consider $C_q=h^{-1}(0)$ where $h(x,y,z)=x^2+y^2-1$. Changing the last row of $M_f^{\reg}$ by the partials of $h$ we get a determinantal matrix which we call $M_f^{\sing}$. The singularities of the curvature locus of a singular 3-manifold are controlled by $V(M_f^{\sing})$. We call the determinants of the minors of $M_f^{\sing}$ $\delta^{\sing}$ and $\delta_{i}^{\sing}$, $i=1,2,3$. We use the superscript $\reg$ for the determinants in the regular case. Notice that $\delta^{\reg}=\delta^{\sing}$.

In contrast with the regular case, the classification of the topological types of curvature loci for singular 3-manifolds in $\mathbb R^5$ seems very hard to tackle as the following examples illustrate. Consider $M^{3}_{\reg}\subset\mathbb{R}^6$ given by $f$, a local parametrisation at the origin $p$ in a Monge form. We shall project $M^3_{\reg}$ along different tangent directions to showcase the possibilities for the curvature locus of the projected singular 3-manifold $M^3_{\sing}\in \mathbb R^5$ at the origin.

\begin{ex}\label{ExampleProjections} Let $Q(x,y,z)=(\frac{1}{2}x^2-\frac{1}{2}y^2,xz,yz)$ be the second fundamental form of $M^3_{\reg}$ at $p$. The curvature locus
    at the origin is a Roman Steiner surface, since $V(M_f)$ is 6 real lines. Consider the asymptotic direction $v=(0,0,1)$, whose
    image by $G$ is the triple point (hence it is not a cross-cap point). The curvature locus at the origin of $M^3_{\sing}$ obtained by orthogonally projecting along $v$ is given by
    $(\cos(2\theta),\frac{2\cos\theta\cos\phi}{\sin\phi},\frac{2\sin\theta\cos\phi}{\sin\phi})$,
    a surface with two cross-cap points (there are 3 solutions in $V(M_f^{\sing})$, one being the $z$-axis, the null tangent direction) that is shown in Figure \ref{FigureExample} (left). Using the same map $G$, we shall project along $v=(0,1,0)$, an asymptotic direction whose image is a cross-cap. In order to do so, we make a rotation $T(x,y,z)=(x,z,-y)$ in the source, taking $v$ to $(0,0,1)$, so that $(G\circ T)(x,y,z)=(\frac{1}{2}x^2-\frac{1}{2}z^2,-xy,-yz)$. Hence, $V(M_f^{\sing})$ has six solutions, one being the $z$-axis and the curvature locus at the origin is a surface with five cross-caps points, as seen in Figure \ref{FigureExample} (center). Still considering $G$ and projection along $v=(\frac{\sqrt{2}}{4},\frac{\sqrt{2}}{4},\frac{\sqrt{3}}{2})$, an asymptotic direction whose image lies in a double point curve, we obtain again six solutions in $V(M_f^{\sing})$, one being the $z$-axis. Again, the curvature locus has five cross-caps (see Figure \ref{FigureExample}, right). Rotations in the source are also applied to take $v$ to $(0,0,1)$, and we obtain $(G\circ T_1)(x,y,z)=(-x(\frac{\sqrt{3}}{2}y-\frac{1}{2}z),\frac{\sqrt{2}}{2}(x-\frac{\sqrt{3}}{2}y+\frac{1}{2}z)(\frac{1}{2}y+\frac{\sqrt{3}}{2}z),\frac{\sqrt{2}}{2}(x+\frac{\sqrt{3}}{2}y-\frac{1}{2}z)(\frac{1}{2}y+\frac{\sqrt{3}}{2}z))$.

	\begin{figure}[h!]
		\begin{center}
			\includegraphics[scale=0.4]{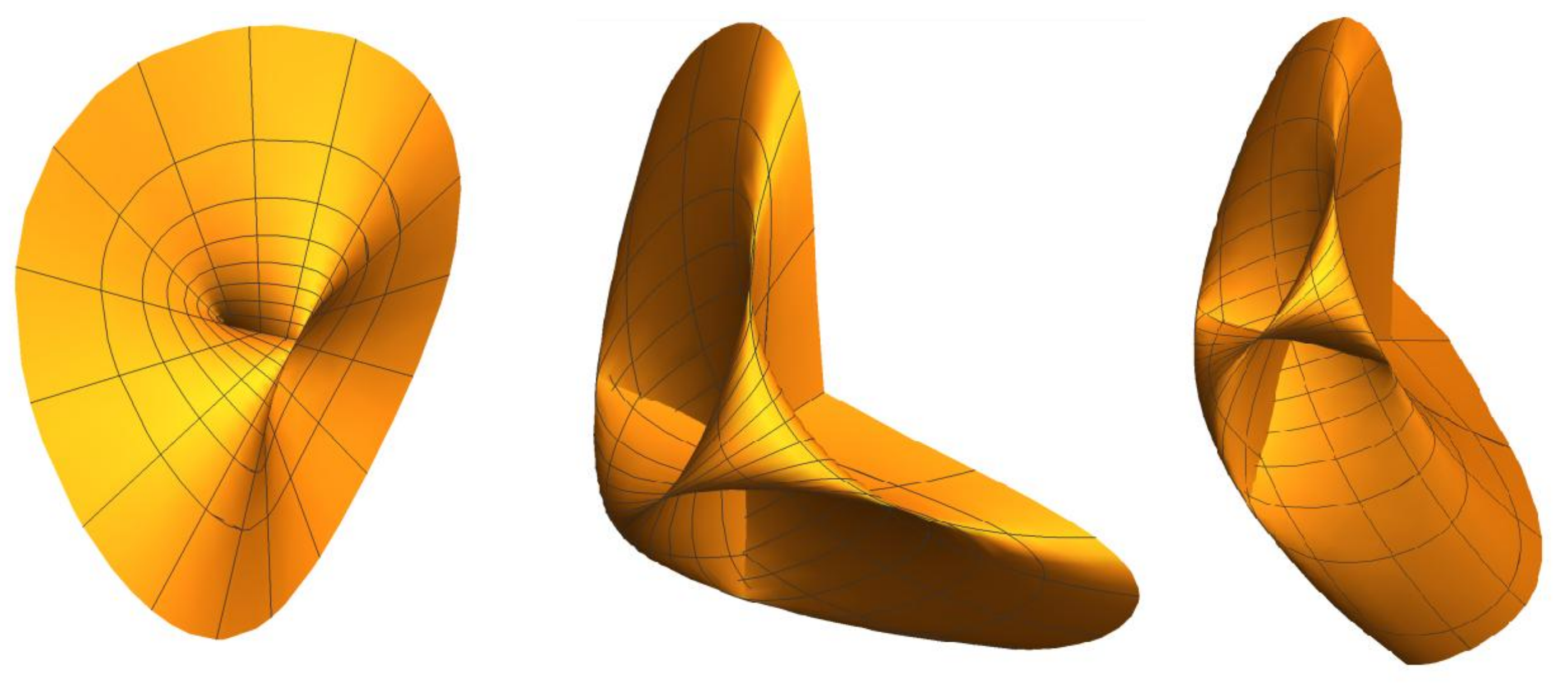}
			\caption{Curvature loci of projections in Example \ref{ExampleProjections}.}
			\label{FigureExample}
		\end{center}
	\end{figure}

\end{ex}

\begin{ex}
 Consider $Q(x,y,z)=(x^2+yz,y^2+xz,z^2+xy)$ as the second fundamental form of $M^3_{\reg}$ at $p$. The curvature locus at $p$ is a Roman Steiner surface, but this time the $z$-axis is not an asymptotic direction. Projecting along $v=(0,0,1)$, $V(M_f^{\sing})$ has six solutions, none of them being the $z$-axis, which means that the curvature locus at the origin of the singular 3-manifold has six cross-caps and it is parametrised by $(2\cos^2\theta+2\sin\theta\frac{\cos\phi}{\sin\phi},2\sin^2\theta+2\cos\theta\frac{\cos\phi}{\sin\phi},\frac{\cos^2\phi}{\sin^2\phi}+\sin(2\theta))$. Finally, we will project along $v=(\frac{\sqrt{2}}{2},0,\frac{\sqrt{2}}{2})$, whose image is the triple point of the curvature locus. Rotating the source to take $v$ to $(0,0,1)$, we obtain $(G\circ T)(x,y,z)=(\frac{1}{2}(x-z)^2+\frac{\sqrt{2}}{2}(x+z)y,y^2+\frac{1}{2}(x^2-z^2),\frac{1}{2}(x+z)^2+\frac{\sqrt{2}}{2}(x-z)y)$. In this case, $V(M_f^{\sing})$ has five solutions, one of them being the $z$-axis. Therefore, the curvature locus at the origin has four cross-caps.
\end{ex}


\begin{teo}\label{iso}
Consider $M^3_{\reg}\subset\mathbb R^6$ and its projection along a tangent direction $M^3_{\sing}\subset\mathbb R^5$. Suppose that the direction of projection is not asymptotic. Then $V(M_f^{\reg})$ is isomorphic to $V(M_f^{\sing})$. In particular, the curvature loci of $M^3_{\reg}\subset\mathbb R^6$ and $M^3_{\sing}\subset\mathbb R^5$ have the same number and type of singularities.
\end{teo}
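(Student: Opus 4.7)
Plan: After a linear change of coordinates, assume the tangent direction of projection is $e_3=(0,0,1)$. Then $M_f^{\reg}$ and $M_f^{\sing}$ share their first three rows and differ only in the bottom row, which is $(2x,2y,2z)$ versus $(2x,2y,0)$. Their common first maximal minor $\delta$ defines the asymptotic directions, and the hypothesis that $(0,0,1)$ is not asymptotic translates to $\delta(0,0,1)\neq 0$; by homogeneity this forces $V(\delta)$, and hence both $V(M_f^{\reg})$ and $V(M_f^{\sing})$, to miss the $z$-axis entirely.

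The central tool is the radial diffeomorphism
\[
\Phi\colon \mathbb S^2\setminus\{(0,0,\pm 1)\}\longrightarrow C_q,\qquad \Phi(u)=\frac{u}{\sqrt{u_1^2+u_2^2}}.
\]
By the degree-$2$ homogeneity of $\eta$,
\[
(\eta|_{C_q}\circ\Phi)(u)=g(u)\,\eta|_{\mathbb S^2}(u),\qquad g(u)=\frac{1}{1-u_3^2},
\]
with $g$ smooth, strictly positive and nonvanishing on $\mathbb S^2\setminus\{\text{poles}\}$. Because the non-asymptotic hypothesis rules out any singularity of $\eta|_{\mathbb S^2}$ or $\eta|_{C_q}$ at the excluded poles, $\Phi$ restricts to a bijection between the singular point sets of these two maps, and in particular to a bijection between $V(M_f^{\reg})\cap\mathbb S^2$ and $V(M_f^{\sing})\cap C_q$.

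The remaining and main step is to argue that this bijection preserves the type ($CC$, $TCC$ or $DTCC$) of each singularity, or equivalently, by the correspondence proved in Theorem \ref{class1}, that multiplication with the unit $g$ preserves the multiplicity of the corresponding line in the determinantal variety. For cross-caps (multiplicity $1$) this follows from $2$-determinacy: a direct $2$-jet computation shows that the perturbation introduced by $g$ can be absorbed by a source change of coordinates, returning the CC normal form. For the non-finitely-determined $TCC$ and $DTCC$ germs (multiplicities $2$ and $3$), the distinguishing invariant is the multiplicity of the double-point curve in a generic transverse slice, a $\mathcal K$-invariant which is manifestly preserved by multiplication with a unit. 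Carrying out a clean unified treatment of these type-preservation assertions, together with verifying that the bijection matches the multiplicity data of the two determinantal schemes line by line, is the main technical obstacle of the proof.

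Granting this, the curvature loci $\Delta_v=\eta(\mathbb S^2)$ and $\Delta_{cv}=\eta(C_q)$ have the same numbers and types of $CC$, $TCC$ and $DTCC$ points, which is the ``in particular'' assertion. Consequently, the multiplicity distributions of $V(M_f^{\reg})$ and $V(M_f^{\sing})$ as zero-dimensional subschemes of $\mathbb P^2$ coincide, yielding the required isomorphism.
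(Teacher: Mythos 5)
Your central step is false: the radial map $\Phi$ does \emph{not} carry the singular set of $\eta|_{\mathbb S^2}$ to that of $\eta|_{C_q}$, and excluding the poles does not fix this. Writing $J(u)$ for the Jacobian of the quadratic map $Q$ at $u$, a line $\mathbb{R}u$ with $\ker J(u)=\langle v\rangle$ lies in $V(M_f^{\reg})$ iff $u_1v_1+u_2v_2+u_3v_3=0$, but in $V(M_f^{\sing})$ iff $u_1v_1+u_2v_2=0$; these conditions differ by $u_3v_3$, which has no reason to vanish. Equivalently, $d(g\,\eta)=g\,d\eta+\eta\otimes dg$, and the rank-one term $\eta\otimes dg$ (nonzero since $dg(v)$ is a multiple of $u_3v_3$) generically raises the rank back to $2$ at a singular point of $\eta|_{\mathbb S^2}$. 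A concrete counterexample: take $Q=(x^2+yz,\,y^2+xz,\,2xy+z^2)$; the direction $(0,0,1)$ is not asymptotic, yet for $u=(1,1,-1+\sqrt3)$ one finds $\ker J(u)=\langle(1,1,-(2+z))\rangle$ with $\langle u,v\rangle=0$ but $u_1v_1+u_2v_2=2\neq0$, so this line lies in $V(M_f^{\reg})$ and not in $V(M_f^{\sing})$. So the two varieties are in general \emph{different} sets of lines, and no correspondence that fixes every line through the origin (as $\Phi$ does) can prove the theorem.

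The isomorphism asserted in the statement is realized by a genuinely nontrivial linear change of coordinates, and this is where the non-asymptotic hypothesis actually enters: it guarantees (via Proposition 5.2 of \cite{BenediniRuasSacramento}) that after target changes the $2$-jet takes the form $(x,y,P_1(x,y)+ayz,P_2(x,y)+bxz,P_3(x,y)+cz^2)$ with $abc\neq0$, so that in $M_f^{\reg}$ one can subtract a multiple of the third row from the sphere row $(2x,2y,2z)$ to kill the $z$-entry, and then a linear source change $X=2x-\tfrac1c\partial_xP_3$, $Y=2y-\tfrac1c\partial_yP_3$, $Z=z$ turns the result into the cylinder row. Your proposal uses the hypothesis only to avoid the poles, which is far too weak. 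Finally, note that even on its own terms the proposal is incomplete, since you defer the type-preservation of $CC$, $TCC$ and $DTCC$ points as ``the main technical obstacle''; in the matrix approach this comes for free, because $\bar{\mathcal G}$-equivalence of $M_f^{\reg}$ and $M_f^{\sing}$ preserves the multiplicities of the lines in the determinantal variety, which by Theorem \ref{class1} encode the singularity types.
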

\begin{proof}
For simplicity we consider the direction of projection to be $(0,0,1)$ since by rotation in the tangent space we can take any other direction to $(0,0,1)$. Express the 2-jet of the parametrisation of the singular 3-manifold as
$$(x,y,q_1(x,y)+a_{21}xz+a_{22}yz+a_{12}z^2,q_2(x,y)+b_{21}xz+b_{22}yz+b_{12}z^2,q_3(x,y)+c_{21}xz+c_{22}yz+c_{12}z^2),$$ where $q_1,q_2,q_3$ are homogeneous degree 2 polynomials in $x,y$. Evaluating the Jacobian of the second fundamental form on $(0,0,1)$ we get the matrix
$$\alpha'=\left(
  \begin{array}{ccc}
    a_{21} & a_{22} & 2a_{12} \\
    b_{21} & b_{22} & 2b_{12} \\
    c_{21} & c_{22} & 2c_{12} \\
  \end{array}
\right).$$ If $(0,0,1)$ is not asymptotic, then $\det(\alpha')\neq 0$. By Proposition 5.2 in \cite{BenediniRuasSacramento} if $\det(\alpha')\neq 0$ then, by linear changes of coordinates in the target, the 2-jet of the parametrisation can be taken to $$f(x,y)=(x,y,P_1(x,y)+ayz,P_2(x,y)+bxz,P_3(x,y)+cz^2),$$ with $abc\neq 0$ and such that $(P_1,P_2,P_3)$ is $GL(2)\times GL(3)$-equivalent to one of the orbits $(x^2,y^2,xy),(x^2,y^2,0),(xy,x^2-y^2,0),(x^2,xy,0),(x^2\pm y^2,0,0),(x^2,0,0)$ or $(0,0,0)$. We consider now the matrix
$$M_f^{\reg}=\left(
  \begin{array}{ccc}
    \frac{\partial P_1}{\partial x} & \frac{\partial P_1}{\partial y}+az & ay \\
    \frac{\partial P_2}{\partial x}+bz & \frac{\partial P_2}{\partial y} & bx \\
    \frac{\partial P_3}{\partial x} & \frac{\partial P_3}{\partial y} & 2cz \\
    2x & 2y & 2z \\
  \end{array}
\right).$$

Using $\mathcal H$ from the group $\bar{\mathcal G}=\mathcal R\times\mathcal H$ we can do operations with the lines of $M_f^{\reg}$ to obtain a matrix $\tilde M$ which differs from $M_f^{\reg}$ only in the last line, where we get $(2x-c\frac{\partial P_3}{\partial x},2y-c\frac{\partial P_3}{\partial y},0)$. Doing the change of coordinates $X=2x-c\frac{\partial P_3}{\partial x},Y=2y-c\frac{\partial P_3}{\partial y},Z=z$ we obtain a matrix $\tilde{\tilde{M}}$ such that $V(M_f^{\reg})$ is isomorphic to $V(\tilde{\tilde{M}})$. Notice that $V(\tilde{\tilde{M}})$ gives the singular set of a map $h$ restricted to the cylinder $C_q$ where $h$ is $(P_1(x,y)+ayz,P_2(x,y)+bxz,P_3(x,y)+cz^2)$ composed with the previous change of coordinates in the source. This change of coordinates does not affect the corresponding image, therefore, $V(\tilde{\tilde{M}})$ is ismorphic $V(M_f^{\sing})$ and so $V(M_f^{\reg})$ is isomorphic to $V(M_f^{\sing})$.
\end{proof}

\begin{prop}
Suppose the direction of projection is an asymptotic direction, then the curvature locus of the singular manifold has at least one singularity less than the locus of the regular manifold. In particular, if the direction of projection corresponds to a cross-cap singularity, this cross-cap goes to infinity in the locus of the singular manifold.
\end{prop}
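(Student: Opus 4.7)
The plan is to identify a distinguished line $L$ in the tangent space that is forced into $V(M_f^{\sing})$ exactly when the projection direction is asymptotic, and that contributes no finite singularity to $\Delta_{cv}$ because it is the axis of the cylinder $C_q$.

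After rotating the tangent space we may assume the projection direction is $v=(0,0,1)$, and express the $2$-jet in the parametrization used in the proof of Theorem~\ref{iso}, with $3\times 3$ Jacobian $\alpha'$ of $II$ at $v$; the hypothesis that $v$ is asymptotic is then exactly $\det(\alpha')=0$. Let $L=\{(0,0,z):z\in\mathbb{R}\}$. Since $II$ is homogeneous quadratic, its Jacobian along $L$ equals $z\alpha'$, so $\delta^{\sing}(0,0,z)=z^{3}\det(\alpha')=0$. The last row $(2x,2y,0)$ of $M_f^{\sing}$ vanishes identically on $L$, forcing the remaining minors $\delta_i^{\sing}$ to vanish on $L$ as well; thus $L\subset V(M_f^{\sing})$ with positive multiplicity. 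By contrast, $L\not\subset V(M_f^{\sing})$ whenever $v$ is not asymptotic, since $\delta^{\sing}(0,0,z)=z^{3}\det(\alpha')$ is then not identically zero.

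Since $L$ does not meet $C_q=\{x^{2}+y^{2}=1\}$, it contributes no finite singularity to $\Delta_{cv}$. To obtain the count, choose a generic one-parameter family $v_{t}$ of tangent directions with $v_{0}=v$ and $v_{t}$ non-asymptotic for $t>0$. By Theorem~\ref{iso}, for $t>0$ the locus $\Delta_{cv}(v_{t})$ has the same number of singularities as the fixed regular locus $\Delta_{v}$. The scheme $V(M_{f_{t}}^{\sing})$ depends continuously on $t$, but $L$ only enters it at $t=0$, so at least one component of $V(M_{f_{t}}^{\sing})$ must approach $L$ as $t\to 0^{+}$. As such a component tilts towards the axis of $C_q$ its intersection with $C_q$ recedes to infinity in $T_{q}\tilde{M}$, and since $II$ is homogeneous of degree two, the corresponding singularity of $\Delta_{cv}(v_{t})$ diverges in $N_{p}M^{3}_{\sing}$. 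Hence at $t=0$ the locus $\Delta_{cv}$ carries at least one singularity less than $\Delta_{v}$. For the second claim, assume $v$ corresponds to a cross-cap of $\Delta_{v}$, so $L\subset V(M_{f}^{\reg})$ with multiplicity one and the cross-cap is the point $II(v,v)$. Deform $\mathbb{S}^{2}$ into $C_q$ via the quadrics $\{x^{2}+y^{2}+tz^{2}=1\}_{t\in(0,1]}$; the preimages of the cross-cap on $L$ at parameter $t$ are $(0,0,\pm 1/\sqrt{t})$, and their images under $II$ equal $t^{-1}II(v,v)$, which diverge along the ray through $II(v,v)$ in $N_{p}M^{3}_{\sing}$ as $t\to 0^{+}$. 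This is the precise sense in which the cross-cap escapes to infinity.

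The main obstacle is making the continuity step rigorous: one must verify that at least one component of $V(M_{f_{t}}^{\sing})$ genuinely limits to $L$ and represents a singularity that is lost, rather than one that coalesces with another existing finite singularity before being absorbed by $L$. The cleanest route is a direct multiplicity count in the spirit of Lemma~5.5 of \cite{bivianuno}: $V(M_{f}^{\sing})$ is defined by four ternary cubics of the same bidegrees as those defining $V(M_{f}^{\reg})$, so an adaptation of the lemma should give total multiplicity $6$ generically, and subtracting the positive multiplicity absorbed by $L$ bounds the number of finite singularities of $\Delta_{cv}$ by $6-1=5$. Verifying that the degeneration from $M_{f}^{\reg}$ to $M_{f}^{\sing}$ preserves the generic codimension-two Cohen--Macaulay determinantal stratum is the delicate point; all examples in Section~4 confirm the count.
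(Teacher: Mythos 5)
Your core argument is exactly the paper's: after rotating the projection direction to $(0,0,1)$, the line $L=\{(0,0,z)\}$ satisfies $\delta^{\sing}(0,0,z)=z^3\det(\alpha')=0$ precisely when the direction is asymptotic, while the last row $(2x,2y,0)$ of $M_f^{\sing}$ vanishes on $L$ and kills the remaining minors, so $L\subset V(M_f^{\sing})$ but misses the cylinder $C_q$ and the corresponding singularity escapes to infinity. The additional deformation and multiplicity discussion you append is consistent with (and somewhat more careful than) the paper, which simply asserts the count at this informal level; your proof is correct and takes essentially the same approach.
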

\begin{proof}
If $(0,0,1)$ is asymptotic then $\delta^{\reg}(0,0,z)=0$ and so $\delta^{\sing}(0,0,z)=0$. On the other hand $M_f^{\sing}$ is

$$\left(
                                      \begin{array}{ccc}
                                         & \mbox{Jacobian matrix of}\ q &  \\
                                        2x & 2y & 0 \\
                                      \end{array}
                                    \right)$$
and so $\delta_{i}^{\sing}=2xa_i(x,y,z)+2yb_i(x,y,z)$ for some functions $a_i,b_i$. So $\delta_i^{\sing}(0,0,z)=0$ for $i=1,2,3$. This means that $(0,0,z)$ is a solution in $V(M_f^{\sing})$ but since it does not intersect the cylinder $C_q$ this singularity lies at infinity.
\end{proof}

\section{The generic curvature locus of a $3$-manifold}

Finding necessary and sufficient conditions in terms of geometric invariants to characterize the topological type of the curvature locus
at a given point of a $3$-manifold seems to be a hard task.

The curvature locus being substantial or not is affine invariant but its topological type is not affine invariant for 3-manifolds. Our goal in this section
is to classify the generic curvature locus of each $\mathcal G$-orbit of a quadratic map $Q: \mathbb R^3 \to \mathbb R^3.$

More precisely, with the notation of  Section \ref{subsec:nets}, given a net
$Q=(q_1,q_2,q_3)\in \Gamma$, we can naturally associate the $2$-jet of parametrisation of a smooth $3$-manifold in the Monge form:
$$(x,y,z)\mapsto(x,y,z,q_1(x,y,z),q_2(x,y,z),q_3(x,y,z)),$$
whose second fundamental form at the origin is given by the quadratic map $Q$. The curvature locus of a normal form in Tables \ref{caseAB} and \ref{other types} are not generic in general from the geometrical point of view. Constructing $M_f$ with those normal forms and $\mathbb{S}^2=\rho^{-1}(0)$ where $\rho(x,y,z)=x^2+y^2+z^2-1$ may not give the best possible situation.

The classification of $\mathcal G$-orbits can be refined by the $\bar{\mathcal G}$-classification of the matrices corresponding to the nets in each $\mathcal G$-orbit.

Consider a generic positive definite homogeneous map $p:\mathbb{R}^3\rightarrow\mathbb{R}$,
\begin{equation}\label{p-homogeneous}
 p(x,y,z)=A_1x^2+A_2xy+A_3y^2+A_4xz+A_5yz+A_6z^2.   
\end{equation}

Our aim is to obtain a linear map $T:\mathbb{R}^3\rightarrow\mathbb{R}^3$ such that the following diagram commutes
$$
\xymatrix{
	\mathbb{R}^3 \ar[r]^{\rho} & \mathbb{R} \\
	\mathbb{R}^3 \ar[u]^{T} \ar[ur]_{p} & }.
$$

Changing $\rho$ for $p$ in the associated determinantal matrix we get
$$\tilde{M}_f=\left(
\begin{array}{ccc}
	\frac{\partial j^2f_1}{\partial x} & \frac{\partial j^2f_1}{\partial y} & \frac{\partial j^2f_1}{\partial z} \\
	\frac{\partial j^2f_2}{\partial x} & \frac{\partial j^2f_2}{\partial y} & \frac{\partial j^2f_2}{\partial z} \\
	\frac{\partial j^2f_3}{\partial x} & \frac{\partial j^2f_3}{\partial y} & \frac{\partial j^2f_3}{\partial z} \\
	\frac{\partial p}{\partial x} & \frac{\partial p}{\partial y} & \frac{\partial p}{\partial z} \\
\end{array}
\right).$$

\begin{prop}\label{generic}
	The generic normal form (in the sense of the $\bar{\mathcal G}$-classification) of the net $Q$ is given by $Q\circ T$ and the singularities of the generic curvature locus are given by $V(\tilde{M}_f)$.
\end{prop}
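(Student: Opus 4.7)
The plan is to reuse the Lagrange-multiplier interpretation of the singularities of a curvature locus from Section 3, but with the sphere equation $\rho$ replaced by the generic positive definite form $p$, and then to transfer the resulting picture back to the standard setting by means of the linear isomorphism $T$.

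First, I would verify directly that $V(\tilde M_f)$ coincides with the critical locus of the restriction $Q|_{p^{-1}(c)}$: the simultaneous vanishing of all $3\times 3$ minors of $\tilde M_f$ at $v\neq 0$ is exactly the rank-$\leq 2$ condition for this $4\times 3$ matrix, which by a standard Lagrange-multiplier computation characterises the critical points of $Q$ on the level surface $p^{-1}(c)$. Because every entry of $\tilde M_f$ is homogeneous, this condition is independent of the positive level $c$, and since all positive level sets of $p$ are homothetic ellipsoids, $V(\tilde M_f)$ really controls the singular locus of the image $Q(p^{-1}(1))$, the natural analogue of the curvature locus when one replaces the unit sphere by a generic ellipsoid.

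Second, by Sylvester's theorem the positive definite form $p$ admits a linear change of variables $T\in GL(3)$ realising the commutation $\rho\circ T=p$ of the diagram (up to the innocuous constant in $\rho$, which does not affect gradients or level sets). This $T$ carries the ellipsoid $\{p=1\}$ onto the unit sphere $\mathbb{S}^2$, so pulling back $Q|_{p^{-1}(1)}$ by $T^{-1}$ identifies it with the standard curvature locus of the net $Q\circ T^{-1}$, which is $\mathcal{G}$-equivalent to $Q\circ T$. At the matrix level, the chain rule exhibits a $\bar{\mathcal G}$-equivalence between $\tilde M_f$ and the standard matrix $M_g$ associated to a Monge 2-jet whose second fundamental form lies in this orbit: the source change of coordinates is an element of $\mathcal R$, and the readjustment of the bottom row from $\nabla p$ to $\nabla\rho$ is absorbed by an element of the $GL(4)$ factor of $\mathcal H$. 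This identification establishes the second claim.

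Finally, the six parameters $(A_1,\ldots,A_6)$ of $p$ sweep out a Zariski open subset of positive definite quadratic forms on $\mathbb{R}^3$, and modulo the orthogonal group $O(3)$ (which preserves $\rho$ and therefore only acts by an element of $\mathcal R$) they parametrise a dense family of linear maps $T\in GL(3)$. Consequently $Q\circ T$ ranges over a dense subset of its $\mathcal G$-orbit, and for generic $p$ it falls into the open $\bar{\mathcal G}$-stratum of that orbit, which gives the first claim. The hard part will be to make precise the notion of \emph{generic} at each stage: one has to verify that the degeneracy loci in the coefficient space of $p$ form a proper Zariski closed subset, and that the resulting map from these coefficients to $\bar{\mathcal G}$-orbit representatives is transverse to the stratification of matrix normal forms, so that a generic $p$ genuinely lands in a top stratum and not in one of the degenerate subcases of Tables \ref{caseAB} or \ref{other types}.
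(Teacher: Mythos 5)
The paper offers no proof of Proposition~\ref{generic} at all: it is presented as an immediate consequence of the construction of $p$, $T$ and $\tilde M_f$, and the real work is deferred to the case-by-case analysis in Theorem~\ref{genlocus}. Your argument supplies the missing justification and is essentially the intended one: the vanishing of the $3\times 3$ minors of $\tilde M_f$ is the rank-drop (Lagrange multiplier) condition for $Q$ restricted to a level ellipsoid of $p$, and the chain rule gives a $\bar{\mathcal G}$-equivalence between $\tilde M_f$ and the standard matrix of the transformed net, so that $V(\tilde M_f)$ computes the singularities of an honest curvature locus in the $\mathcal G$-orbit of $Q$; and varying $p$ over positive definite forms sweeps, modulo the $O(3)$-action that fixes $\rho$, the whole source-$GL(3)$ orbit, which is all that matters since target changes only move the locus by a linear map.

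Two points need repair. First, the direction of $T$: you read the diagram as $\rho\circ T=p$, so that $T$ sends the ellipsoid to the sphere, and you are then forced to say that the relevant net is $Q\circ T^{-1}$, ``which is $\mathcal G$-equivalent to $Q\circ T$.'' That last step is not a valid justification of the statement as written: $\mathcal G$-equivalence does \emph{not} preserve the affine type of the curvature locus --- that is the whole point of Section~5 --- so equivalence of $Q\circ T^{-1}$ and $Q\circ T$ says nothing about their loci agreeing. The fix is purely one of convention: in the paper's own worked example ($p=x^2+4y^2+(x+z)^2$, $T(X,Y,Z)=(X,\tfrac{Y}{2},Z-X)$) one has $p\circ T=x^2+y^2+z^2$, i.e.\ $T$ carries the unit sphere onto $\{p=1\}$, and then $(Q\circ T)(\mathbb S^2)=Q(p^{-1}(1))$ on the nose, with $M_{Q\circ T}(u)\cdot DT^{-1}=\tilde M_Q(Tu)$ giving the required isomorphism of the varieties of minors. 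Use that convention and drop the appeal to $\mathcal G$-equivalence. Second, a small slip: the adjustment of the bottom row from $\nabla\rho$ to $\nabla p$ is absorbed by the source change in $\mathcal R$ together with right multiplication by $DT$ (the $GL(3)$ factor of $\mathcal H$), not by the $GL(4)$ row-operation factor. Deferring the verification that the degenerate parameter values of $(A_1,\dots,A_6)$ form a proper closed subset is acceptable here, since that is precisely what the explicit discriminant computations in the proof of Theorem~\ref{genlocus} (e.g.\ $A_4^2+4A_6(A_3-A_1)\neq 0$ for $F_a$) establish orbit by orbit.
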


Now we can obtain the generic curvature locus in each $\mathcal G$-orbit.

\begin{teo}\label{genlocus} Let $Q \in \Gamma$ be a net of quadrics and
$$f(x,y,z)=(x,y,z, f_1(x,y,z), f_2(x,y,z), f_3(x,y,z))$$ a parametrization of $M_{\reg}^3\subset\mathbb{R}^6$ whose second fundamental form at the origin is $Q$.
Then, for each possible $\mathcal{G}$-orbit	of $Q$, Table \ref{generical types} provides the generic curvature locus of $M^3_{\reg}$ at the origin.
	\begin{table}[h]
		\caption{Generic curvature locus for regular manifolds}
		\centering
		{\begin{tabular}{cc}
				\hline
				Curvature locus & $\mathcal{G}$-Orbit \\
				\hline
				Roman Steiner &  $A,\ B,\ C,\ D_a,\ D_a^{*},\ D_c^{*},\ E_a,\ E_a^{*},\ F_a$ \\
				Cross-cap & $A,\ B_a,\ B_a^*,\ D_a,\ D_a^{*},\ D_b, \ D_b^{*},\ D_c,\ D_c^{*},\ E_b,\ E_b^{*},\ F_a,\ F_b$ \\
				Type $5$ & $F_a^{*},\ F_b^{*},\ G,\ G^*$\\
				Type $6$ & $H$\\
				Cone & $I$  \\
				Ellipsoid & $I^*$  \\
				\hline
			\end{tabular}
		}
		\label{generical types}
	\end{table}
\end{teo}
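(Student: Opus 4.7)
The plan is to process each $\mathcal{G}$-orbit from Tables \ref{caseAB} and \ref{other types} one by one, using Proposition \ref{generic} together with the characterization of Theorem \ref{class1}. For a given orbit with normal form $Q=(q_1,q_2,q_3)$, I take the Monge parametrisation $f(x,y,z)=(x,y,z,q_1,q_2,q_3)$ and replace the last row of $M_f$ by the gradient of the generic positive definite quadratic form $p$ from \eqref{p-homogeneous}, obtaining $\tilde{M}_f$. Computing the four $3\times 3$ minors $\delta,\delta_1,\delta_2,\delta_3$ yields a system of four ternary cubics whose common real zero locus, intersected with the level set $p^{-1}(0)$, I then count with multiplicity. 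Theorem \ref{class1} then reads off the topological type of $\Delta_v$ from this data.

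For each orbit I would proceed as follows. First I compute $\delta=\det\text{Jac}(Q)$; its factorisation already tells me which lines through the origin are forced into $V(\tilde M_f)$, i.e.\ which asymptotic directions are present and which are independent of the parameters $A_1,\dots,A_6$ of $p$. Then the minors $\delta_i$ involving $p$ are linear in the $A_j$, so for generic choice of $p$ an accidental common zero is avoided, and only the ``intrinsic'' common zeros of $\delta,\delta_1,\delta_2,\delta_3$ survive. Evaluating the local algebra dimension $\dim_{\mathbb{C}}\mathcal O_3/\langle\delta,\delta_1,\delta_2,\delta_3\rangle$ at each such line (via a transverse plane, as in Example 3.3) gives its multiplicity, and Theorem \ref{class1}(i)--(iv) then assigns Roman Steiner, Cross-cap, Type 5 or Type 6 according to the pattern $(6{\cdot}1)$, $(2{\cdot}1)$ plus $4$ complex, $(2{\cdot}1,2{\cdot}2)$, or $(2{\cdot}3)$ respectively. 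The orbit-by-orbit pattern of real versus complex solutions depends on the signs appearing in the normal forms of Table \ref{other types} (e.g.\ $\pm$ choices in $D_b,D_c,E_a,E_b,F_a,F_b$), which explains why certain orbits appear in more than one row of Table \ref{generical types}: the subcase determines whether extra roots of $\delta=0$ are real or complex.

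For the orbit $A$ with modulus $(c,g)$ I would split according to Table \ref{caseAB}: the discriminant $\Delta$ factors differently in each of the four subcases $A_a,A_b,A_c,A_d$, producing either $6$ real asymptotic lines (Roman Steiner) or $2$ real and $4$ complex (Cross-cap). The same sign analysis handles the $B$, $D$, $E$, $F$ families and their starred versions. For orbits $I$ and $I^*$ the Jacobian $\delta$ vanishes identically or reduces to a monomial times a plane equation, so $V(\tilde M_f)$ contains a $2$-dimensional component; by parts (v)--(vi) of Theorem \ref{class1} this gives a truncated cone ($I$) and an ellipsoid ($I^*$), distinguished by whether an additional real line appears (as illustrated by Example 3.3(iii)--(iv)).

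The main obstacle is really bookkeeping rather than a single hard step: for each of the roughly twenty orbits one must verify that the ``generic'' specialisation $p$ in place of $\rho$ does not introduce spurious roots and does not destroy any multiplicities forced by the structure of $\delta$. The key point that makes this work uniformly is that the coefficients $A_1,\dots,A_6$ enter the $\delta_i$ linearly and independently, while $\delta$ itself is intrinsic to $Q$; hence for $(A_1,\dots,A_6)$ outside a proper algebraic subset of the positive definite cone, the common zero set $V(\tilde M_f)$ is exactly the set of lines forced by the $\mathcal G$-geometry of $Q$, with their intrinsic multiplicities. Combined with the invariance of $V$ under $\bar{\mathcal G}$-equivalence noted in the Remark after Theorem \ref{class1}, this reduces the verification to a finite list of explicit computations, one per orbit, whose outcomes are summarised in Table \ref{generical types}.
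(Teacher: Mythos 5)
Your overall skeleton is the same as the paper's: take the normal form of each $\mathcal G$-orbit, form the $4\times 3$ matrix with the gradient of a quadratic constraint as last row, compute the four ternary cubics, count real/complex solutions with multiplicity, and read off the type from Theorem \ref{class1}. (The paper actually treats $A$, $B$, $C$ with the standard sphere $\rho$, letting the moduli $c,g$ supply the genericity, and only invokes Proposition \ref{generic} for the moduli-free orbits; your uniform use of the generic form $p$ is a harmless variant.)

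However, your stated ``key point'' contains a genuine error that would derail the computation. You claim that for $(A_1,\dots,A_6)$ outside a proper algebraic subset of the positive definite cone, $V(\tilde M_f)$ is \emph{exactly} the set of lines forced by the $\mathcal G$-geometry of $Q$ with their intrinsic multiplicities --- i.e.\ that each orbit has a single well-defined generic answer --- and you attribute the orbits appearing in two rows of Table \ref{generical types} to the $\pm$ sign choices in the normal forms. Both assertions are contradicted by the table itself: $D_a$, $D_a^*$, $D_c^*$ and $F_a$ are each a \emph{single} orbit (the $\pm$ in $F_a,F_b$ distinguishes two different orbits) and yet each appears in both the Roman Steiner and the Cross-cap rows. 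The reason, as the paper's $F_a$ computation shows, is that the number of real solutions depends on the choice of $p$ within the positive definite cone: one gets six real lines when $A_4^2+4A_6(A_3-A_1)>0$ and two real plus four complex when $A_4^2+4A_6(A_3-A_1)<0$, and both are nonempty open conditions. So ``generic'' here does not mean a unique open dense stratum; the positive definite cone splits into several open regions on which the topological type differs, and the content of the theorem is precisely the list of types realized on such open regions for each orbit. The same phenomenon occurs for orbit $A$, where the paper shows the regions $\sigma=(c+1)(3g-1)<0$ and $\sigma>0$ (with $\Delta_\xi>0$) both meet the orbit, giving Roman Steiner and Cross-cap respectively. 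Without recognizing this, your orbit-by-orbit procedure would look for a single answer per orbit and could not reproduce the table; with it, the rest of your plan (minor computations, multiplicity counts via the local algebra, and Theorem \ref{class1}) does go through.
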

\begin{proof}
	Consider $Q\in \Gamma$ such that $q$ is in one of the first $3$-orbits:  $A$, $B$ or $C$. Thus,
	$$Q=(2xz+y^2,2yz,-x^2-2gy^2+cz^2+2gxz),\ c(c+9g^2)\neq0.$$
	To determine the generic topological type of the curvature locus, we must investigate the number of singular points
	of $Q$ when restricted to the unit sphere. In order to do so we calculate the $3\times3$-minors of the matrix
	$$\Lambda=
	\left(
	\begin{array}{ccc}
		& \mbox{Jacobian matrix of}\ Q &  \\
		2x & 2y & 2z \\
	\end{array}
	\right).
	$$
	Since we are only interested in the number of singular points, linear changes in $\Lambda$ do not interfere. Hence, after linear changes,
	we can rewrite $\Lambda$ as
	$$\Lambda_1=
	\left(
	\begin{array}{ccc}
		2z & 2y & 2x \\
		0 & 2z & 2y \\
		0 & 2y-6gy & 2z+2cz\\
		x & y & z \\
	\end{array}
	\right).
	$$
	The $3\times3$-minors of $\Lambda_1$ are
	$$
	\begin{array}{l}
		\delta=8z[(c+1)z^2+(3g-1)y^2];\\
		\delta_1=4(z^3-y^2z+xy^2-x^2z);\\
		\delta_2=4[(-3g-c)yz^2+(c+1)xyz+(3g-1)x^2y];\\
		\delta_3=4[(3g-1)x^2y+(c+1)xz^2].
	\end{array}
	$$
	Let $\delta=0$. Then $z=0$ or $(c+1)z^2+(3g-1)y^2=0$. If $z=0$, we have two solutions for $\delta=\delta_i=0$, $i=1,\ldots,3$.
	If $(c+1)z^2+(3g-1)y^2=0$, this equation has 0,1 or 2 solutions according to the sign of $\sigma=(c+1)(3g-1)$: positive, zero or negative, respectively.
	It is possible to show that if $(c+1)z^2+(3g-1)y^2=0$, we have four solutions of $\delta=\delta_i=0$, $i=1,\ldots,3$ whenever $c=-1$ and two solutions if $c\neq-1$ and $g=\frac{1}{3}$. Taking $\sigma=(c+1)(3g-1)<0$,
	$$
	\begin{array}{l}
		\delta_1=-4z\left(x^2+\frac{(c+1)}{(3g-1)}xz-\frac{(3g+c)}{(3g-1)}z^2\right)=-4z\xi(x,z)  \\
		\delta_2=\frac{4y}{(3g-1)}\xi(x,z) \\
		\delta_3=0.
	\end{array}
	$$
	The equation $\xi(x,z)=0$ may have 1 or 2 solutions, since its discriminant $\Delta_{\xi}=c^2+2c+1+36g^2+12g+12gc-4c\geq0$. This discriminant is represented by a pair of coincident lines in the plane $(g,c)$. Therefore, $\delta=\delta_i=0$, $i=1,\ldots,3$ have 4 solutions if $\Delta_{\xi}>0$ and 2 if $\Delta_{\xi}=0$.
	
	Each one of the orbits $A$, $B$ and $C$ intersects a region of the plane $(g,c)$ where $\sigma=(c+1)(3g-1)<0$ and $\Delta_{\xi}>0$, hence every orbit have generically six singular points and, therefore, the associated $3$-manifold has, at the origin, a Roman Steiner surface as its curvature locus.  Also, the orbits $A$, $B_a$ and $B_a^*$ intersect regions of the plane $(g,c)$ where $\sigma=(c+1)(3g-1)>0$ and $\Delta_{\xi}>0$, where two real and four complex solutions can be found. Hence in those orbits and the curvature locus can be a cross-cap surface as well. See Figure \ref{orbitas}.
	\begin{figure}[h!]
		\begin{center}
			\includegraphics[scale=0.4]{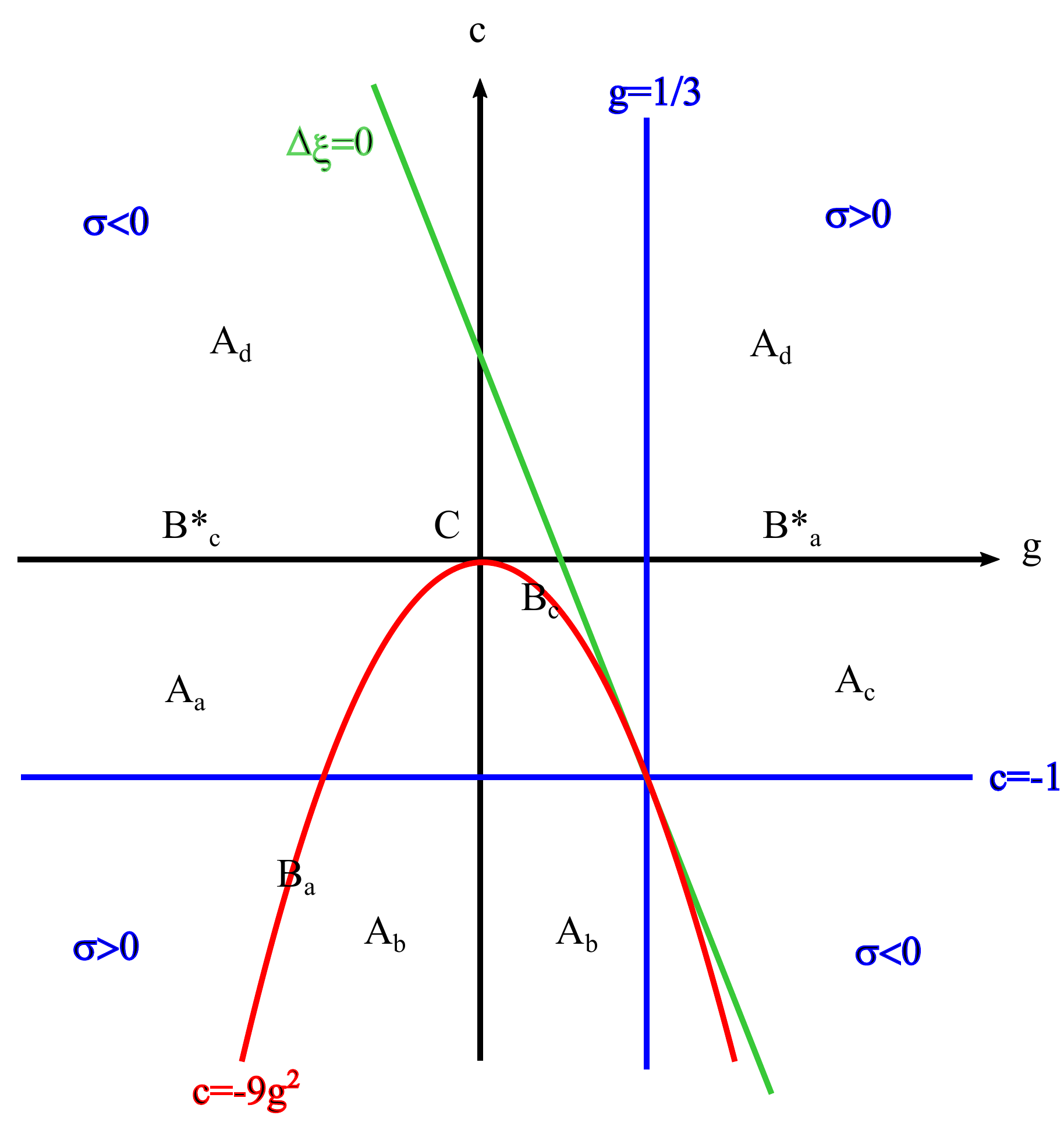}
			\caption{Bifurcation set and orbits.}
			\label{orbitas}
		\end{center}
	\end{figure}
	
	The remaining cases are similar. However, we need Proposition \ref{generic}. Consider for example the orbit $F_a$ in Table \ref{other types}, given by $q=(x^2+y^2,2xy,2yz)$. The matrix of $Q$ restricted to $p^{-1}(0)$ is
	$$\Lambda_{F_a}=\left(
	\begin{array}{ccc}
		& \mbox{Jacobian matrix of}\ q &  \\
		\frac{\partial p}{\partial x} & \frac{\partial p}{\partial y} & \frac{\partial p}{\partial z} \\
	\end{array}
	\right),$$
	where $p$ is as in (\ref{p-homogeneous}).
	Calculating the $3\times3$ minors $\delta_i$, $i=0,\ldots,3$, we obtain six solutions for $\delta_i=0$ when $A_4^2+4A_6(A_3-A_1)>0$, four solutions when $A_4^2+4A_6(A_3-A_1)=0$ and two solutions otherwise. Hence, we may obtain a Roman Steiner surface, Type 5 or a Cross-cap as the curvature locus of the associated $3$-manifold at the origin, respectively. The generic cases, however are the Roman Steiner and the cross-cap. In order to obtain a Roman Steiner surface as a generic curvature locus, for example, we may consider $A_1=A_4=2$, $A_2=A_5=0$, $A_3=4$ and $A_6=1$. Hence $p(x,y,z)=x^2+4y^2+(x+z)^2$, $T(X,Y,Z)=(X,\frac{Y}{2},Z-X)$ and the normal form of the net is given by $(Q\circ T)(X,Y,Z)=(X^2+\frac{Y^2}{4},XY,Y(Z-X))$.
\end{proof}

In fact, applying Proposition \ref{generic} to each normal form we get

\begin{coro}
	For each orbit in Table \ref{other types}, Table \ref{normal forms1} shows a normal form of the net for which we obtain the generic curvature locus of the associated $3$-manifold.
	\begin{table}[h]
		\caption{Normal forms}
		\centering
		{\begin{tabular}{lc}
				\hline
				Name & Normal form \\
				\hline
				$D_a$ (Roman Steiner) &  $(X^2,Y^2,(Z+2Y)^2+2XY)$ \\ 
				$D_a$ (Cross-cap) & $((X+\frac{\sqrt{2}}{2}Z)^2,(\frac{Y}{2}-\frac{\sqrt{2}}{4}Z)^2,2Z^2+(X+\frac{\sqrt{2}}{2}Z)(Y-\frac{\sqrt{2}}{2}Z))$  \\
				$D_b,\ D_c$ & $(X^2-Y^2,2XY,X^2\pm(Z+2Y)^2)$  \\
				$D_a^*$ (Roman Steiner) & $(XZ,YZ,\frac{Z^2}{4}+2XY)$ \\
				$D_a^*$ (Cross-cap) & $(XZ,2YZ,Z^2+XY)$ \\
				$D_b^*$ & $(XZ,\frac{YZ}{3},X^2+\frac{Y^2}{9}- \frac{Z^2}{4})$ \\ 
				$D_c^*$ (Steiner Romana) & $(XZ,\frac{YZ}{3},X^2+\frac{Y^2}{9}+ \frac{Z^2}{4})$ \\
				$D_c^*$ (Cross-cap) & $(\frac{XZ}{3},\frac{2YZ}{3},\frac{X^2}{4}+Y^2+\frac{Z^2}{9})$ \\
				$E_a,\ E_b$ & $(X^2\mp Y^2,2XY,(Z-Y)^2)$  \\
				$E_a^*,\ E_b^*$ & $(X^2-Y^2,2XZ,2YZ)$, $(X^2+\frac{Y^2}{4},2XZ,YZ)$    \\
				$F_a\ \mbox{(Steiner Romana)}$ & $(X^2+ \frac{Y^2}{4},XY,Y(Z-X))$\\ 
				$F_a$ (Cross-cap) & $(\frac{X^2}{4}+Y^2,XY,2YZ)$ \\
				$F_b$ & $(X^2- \frac{Y^2}{4},XY,Y(Z-X))$ \\
				$F_a^*,\ F_b^*$ & $((X-Y)^2\mp Y^2,2(X-Y)Z,Z^2)$  \\
				$G$ &  $(X^2,(Y-X)^2,2(Y-X)(Z-X))$  \\
				$G^*$ & $(2XY,2XZ,Z^2)$  \\
				$H$ & $(X^2,2XY,Y^2+2XZ)$ \\
				$I$ & $(X^2,2XY,Y^2)$ \\
				$I^*$ & $(2XZ,2YZ,Z^2)$ \\
				\hline
			\end{tabular}
		}
		\label{normal forms1}
	\end{table}

\end{coro}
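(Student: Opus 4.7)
The plan is to verify the corollary by a case-by-case computation, orbit by orbit, using the recipe from Proposition \ref{generic} together with the argument pattern already worked out for orbit $F_a$ inside the proof of Theorem \ref{genlocus}. For each row of Table \ref{other types} with normal form $Q_0=(q_1,q_2,q_3)$, the task is twofold: first, identify values of the coefficients $A_1,\dots,A_6$ in the generic positive definite quadratic $p$ of (\ref{p-homogeneous}) that realize each of the generic topological types permitted by Theorem \ref{genlocus}; second, exhibit a linear map $T$ with $p = \rho\circ T$ and display $Q_0\circ T$, which is the required entry of Table \ref{normal forms1}.

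Concretely, I would proceed as follows. For a fixed orbit, form the $4\times 3$ matrix
$$\Lambda = \left(
\begin{array}{c}
\text{Jac}(Q_0) \\
\nabla p
\end{array}
\right),$$
compute the four $3\times 3$ minors $\delta,\delta_1,\delta_2,\delta_3$ and analyze the number, reality and multiplicity of solutions of the system $\delta=\delta_1=\delta_2=\delta_3=0$ on the sphere as a function of the $A_i$. By Theorem \ref{class1}, these data determine the affine type of the curvature locus. The genericity condition carves out a Zariski open set in the $A_i$-space, and within it I would select simple rational values (mirroring the choice $A_1=A_4=2$, $A_3=4$, $A_6=1$, $A_2=A_5=0$ used in the proof of Theorem \ref{genlocus} for $F_a$). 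With those $A_i$, the symmetric matrix of $p$ is positive definite, so a Cholesky factorization yields an upper-triangular $T$ with $T^{\top}T$ equal to this matrix, i.e. $p=\rho\circ T$; substituting $T$ into $Q_0$ gives the listed normal form.

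Where an orbit admits two generic loci (for instance $D_a$, $D_a^*$, $D_c^*$, $F_a$ in Theorem \ref{genlocus}), I would run the preceding step twice with two different choices of $(A_1,\dots,A_6)$ from the two Zariski-open strata, producing the two entries shown in the table. In the remaining orbits $D_b,D_c,D_b^*,E_a,E_b,E_a^*,E_b^*,F_b,F_a^*,F_b^*,G,G^*,H,I,I^*$ the generic curvature locus predicted by Theorem \ref{genlocus} is unique, so only one such computation is needed. The rigid orbits $I$, $I^*$, $H$, $G^*$, $G$ actually force most of the $A_i$ to play no role in $V(\tilde M_f)$, and the corresponding entries in Table \ref{normal forms1} coincide with the original normal forms (possibly after an irrelevant triangular change in the source).

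The main obstacle is bookkeeping rather than conceptual: in each case one must verify that the stratum of $(A_1,\dots,A_6)$ producing the advertised generic type is non-empty (Zariski open, not merely constructible), and that the chosen rational $A_i$ lie in that stratum. This amounts to checking that a specific explicit discriminant polynomial in the $A_i$ — analogous to $A_4^2+4A_6(A_3-A_1)$ encountered for $F_a$ — is non-zero and has the correct sign at the chosen point, together with a positive-definiteness check ($A_1>0$, $4A_1A_3-A_2^2>0$, $\det$ of the symmetric matrix of $p$ positive) so that the Cholesky step producing $T$ goes through. Once these inequalities are verified orbit by orbit, the displayed $Q_0\circ T$ proves the corollary.
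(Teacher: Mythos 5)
Your proposal matches the paper's own (largely implicit) argument: the corollary is obtained by applying Proposition~\ref{generic} orbit by orbit exactly as illustrated for $F_a$ in the proof of Theorem~\ref{genlocus} --- choose coefficients $A_1,\dots,A_6$ of a positive definite $p$ lying in the stratum that yields the desired generic type, factor $p$ through a linear map $T$ relating it to $\rho$, and record $Q\circ T$ as the table entry. The only cosmetic caveat is that the individual strata for orbits admitting two generic loci are cut out by sign conditions on a discriminant (so they are open semialgebraic sets rather than Zariski open), which does not affect the argument.
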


We can also deduce the generic curvature locus for the same $\mathcal G$-orbits but for singular manifolds.

\begin{coro} Let $Q \in \Gamma$ be a net of quadrics and
$$f(x,y,z)=(x,y, f_1(x,y,z), f_2(x,y,z), f_3(x,y,z))$$ a parametrization of $M_{\sing}^3\subset\mathbb{R}^5$ whose second fundamental form at the origin is $Q$.
Then, for each possible $\mathcal{G}$-orbit of $Q$, Table \ref{generical types singular} provides the generic curvature locus of $M^3_{\sing}$ at the origin.
	\begin{table}[h]
		\caption{Generic curvature locus for singular manifolds}
		\centering
		{\begin{tabular}{cc}
				\hline
				Curvature locus & $\mathcal{G}$-Orbit \\
				\hline
				6 CC &  $A,\ B,\ C,\ D_a,\ D_a^{*},\ D_c^{*},\ E_a,\ E_a^{*},\ F_a$ \\
				2 CC & $A,\ B_a,\ B_a^*,\ D_a,\ D_a^{*},\ D_b, \ D_b^{*},\ D_c,\ D_c^{*},\ E_b,\ E_b^{*},\ F_a,\ F_b$ \\
				2 CC and 2 TCC & $F_a^{*},\ F_b^{*},\ G,\ G^*$\\
				2 DTCC & $H$\\
				Ellipse & $I$  \\
				Paraboloid & $I^*$  \\
				\hline
			\end{tabular}
		}
		\label{generical types singular}
	\end{table}
\end{coro}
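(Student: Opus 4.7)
The plan is to reduce the statement to Theorem \ref{genlocus} by exploiting the relationship between the regular and singular cases established in Section 4. Given any quadratic map $Q\in\Gamma$ realised as the second fundamental form at the origin of a singular 3-manifold $M^3_{\sing}\subset\mathbb{R}^5$, the local lifting recalled in the introduction produces a regular 3-manifold $M^3_{\reg}\subset\mathbb{R}^6$ whose orthogonal projection along a tangent direction $v$ recovers $M^3_{\sing}$, with the same second fundamental form.

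The first step is to choose the lift so that $v$ is not asymptotic; since asymptotic directions form a proper algebraic subset of $T_pM^3_{\reg}$ (defined by $\delta=0$), such a choice can always be made generically. Applying Theorem \ref{iso} then gives that $V(M_f^{\reg})$ and $V(M_f^{\sing})$ are isomorphic, so their zero sets have identical decompositions into real and complex lines with identical multiplicities. Combining this with Theorem \ref{genlocus} and the multiplicity-type correspondence established in the proof of Theorem \ref{class1} (multiplicity $1$, $2$, $3$ correspond to $CC$, $TCC$, $DTCC$ respectively), the first four rows of Table \ref{generical types singular} follow at once from the first four rows of Table \ref{generical types} by translating the name of each regular surface into its associated list of singular points: Roman Steiner $\mapsto 6$ CC, Cross-cap $\mapsto 2$ CC, Type $5$ $\mapsto 2$ CC and $2$ TCC, Type $6$ $\mapsto 2$ DTCC.

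For the last two orbits we argue by direct computation on the cylinder $C_q=\{a^2+b^2=1\}$. For orbit $I$ with representative $Q=(X^2,2XY,Y^2)$ the image does not depend on the third coordinate, so the curvature locus reduces to the planar curve traced by $(a,b)\mapsto(a^2,2ab,b^2)$ on the unit circle, which is an ellipse. For orbit $I^*$ with representative $Q=(2XZ,2YZ,Z^2)$, the image of $C_q$ under $(a,b,c)\mapsto(2ac,2bc,c^2)$ satisfies $x^2+y^2=4z$, that is, an elliptic paraboloid.

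The main obstacle is verifying that, for each orbit appearing in the first four rows, a \emph{generic} representative in the sense of Proposition \ref{generic} still admits a non-asymptotic projection direction, so that Theorem \ref{iso} can be applied and the correspondence between the two tables is legitimate; in principle one needs an orbit-by-orbit check that the asymptotic locus does not absorb the whole space of admissible lifts. Once this is established the remainder of the argument is bookkeeping between Tables \ref{generical types} and \ref{generical types singular}.
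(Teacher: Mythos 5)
Your argument is essentially the paper's: lift to $M^3_{\reg}\subset\mathbb{R}^6$, project along a non-asymptotic direction, apply Theorem \ref{iso} to transfer the singularity data of Table \ref{generical types} row by row, and treat the degenerate orbits by direct computation on the cylinder. The ``main obstacle'' you flag at the end is dispatched in one line rather than by a heavy orbit-by-orbit analysis: the asymptotic directions are exactly the zeros of the cubic $\delta=\det(JQ)$, and $\delta\equiv 0$ precisely for orbit $I$ (where $Q$ does not involve the third variable), so every other orbit admits a non-asymptotic direction; the only genuine subtlety, which the paper records, is that for most of the generic normal forms of Table \ref{normal forms1} the direction $(0,0,1)$ is itself asymptotic, so one must project along a different direction, and the resulting rotation of the source is a change of coordinates inside $\mathcal G$, hence does not move the net out of its $\mathcal G$-orbit (though it does change the generic normal form). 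Your explicit computations for $I$ (the ellipse) and $I^*$ (the paraboloid $x^2+y^2=4z$) match the table; for $I^*$ the paper subsumes the case under the general projection argument without exhibiting the paraboloid, so your direct verification is a useful complement.
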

\begin{proof}
For all $\mathcal G$-orbits except for $I$, there exists a non-asymptotic direction in $M^3_{\reg}$. In this case, if we take the normal form which yields the generic curvature locus as in Table \ref{normal forms1} and project in a non-asymptotic direction, by Theorem \ref{iso}, the curvature locus of the singular projection will have the same number and type of singularities as the curvature locus in the regular case. In most cases (all orbits except for $D_a^{*}$, $D_b^{*}$, $D_c^{*}$, $I^*$) $(0,0,1)$ is an asymptotic direction, so we must chose a different direction to do the projection. This projection will force a linear change of coordinates in the tangent space which will eventually force a linear change of coordinates in the source in the normal forms of Table \ref{normal forms1}. However, this change of coordinates lies in $\mathcal G$ and so we do not change the type of $\mathcal G$-orbit. For all, except for the above mentioned 4 orbits, the generic normal form will change.

For the orbit $I$, all tangent directions are asymptotic so the number and type of singularities of a generic curvature locus in this orbit for the singular case will change. Notice that any singular 3-manifold with the associated net in this $\mathcal G$-orbit will have a parametrisation $\mathscr A^2$-equivalent to $(x,y,0,0,0)$. This means that the 2-jet depends only on $x$ and $y$ and so the generic curvature locus in this orbit is an ellipse.
\end{proof}


\begin{thebibliography}{22}

\bibitem{benediniosetpm} {\sc P. Benedini Riul and R.Oset Sinha} { \it Relating second order geometry of manifolds through projections and normal sections}. Publ. Mat., v. 65 (2021), n. 1,  p. 389-407.




\bibitem{BenediniRuasSacramento} {\sc P. Benedini Riul, M. A. S. Ruas and A. de Jesus Sacramento} {\it Singular 3-manifolds in $\mathbb{R}^{5}$}. Rev. R. Acad. Cienc. Exactas F\'is. Nat. Ser. A Mat. RACSAM 116, 56 (2022),  n. 1, 18 pp.








\bibitem{Carmen3var}{\sc R. R. Binotto, S. I. Costa and  M. C. Romero Fuster} {\it The curvature Veronese of a $3$-manifold in Euclidean space}. Real and complex singularities: Amer. Math. Soc., Providence, RI (2016), (Contemp. Math., v. 675), p. 25--44.

\bibitem{Carmen3var2} {\sc R. R. Binotto, S. I. Costa and  M. C. Romero Fuster}{\it Geometry of 3-manifolds in Euclidean space}.In: Theory of singularities of smooth mappings and around it. Res. Inst. Math. Sci. (RIMS), Kyoto, (RIMS K\^oky\^uroku Bessatsu, B55) (2016) p. 1--15.

\bibitem{bivianuno}{\sc C. Bivi\`a-Ausina and J.J. Nu\~{n}o-Ballesteros} {\it Multiplicity of iterated Jacobian extensions of weighted homogeneous map germs}.  Hokkaido Math. J. 29 (2000), no. 2, 341--368.

\bibitem{computeiros}
{\sc Coffman, A., Schwartz, A. J. and Stanton, C.}
\newblock The algebra and geometry of Steiner and other quadratically parametrizable surfaces.
\newblock{\em Computer Aided Geometric Design, 13(3)}, 257-286. (1996).





\bibitem{dreibelbis} {\sc{D. Dreibelbis}} {\it Self-conjugate vectors of immersed 3-manifolds in $\mathbb R^6$}.  Topology Appl. 159 (2012), no. 2, 450--456.

\bibitem{FK} {\sc {A. Fr\"uhbis-Kr\"uger}} {\it  Classification of simple space curve singularities}. Comm. Algebra 27 (1999), no. 8, 3993--4013.

\bibitem{F-KN}{\sc A. Fr\"hbis-Kr\"uger and A. Neumer} {\it Simple Cohen-Macaulay codimension 2 singularities.} Comm. Algebra 38 (2010), no. 2, 454--495.

\bibitem{wall/duplesis}
{\sc du Plessis, A. and Wall, C. T. C.}
\newblock {\em The geometry of topological stability.}
\newblock {London Mathematical Society Monographs. New Series, 9. Oxford Science Publications. The Clarendon Press, Oxford University Press, New York}, viii+572 pp. (1995).


\bibitem{wall}
{\sc Edwards, S. A. and Wall, C. T. C.}
 \newblock Nets of quadrics and deformations of ${\Sigma^{3\,{<3>}}}$ singularities.
 \newblock {\em In Mathematical Proceedings of the Cambridge Philosophical Society. Cambridge University Press, 105 (1)}, 109-115. (1989).
















\bibitem {mather}
{\sc Mather, J. N.}
 \newblock Stability of $C^{\infty}$ mappings, IV: Classification of stable map-germs by $\mathbb{R}$-algebras.
  \newblock {\em Publications Math$\acute{e}$matiques de l'Institut des Hautes $\acute{E}$tudes Scientifiques, 37(1)}, 223-248. (1969).







\bibitem{libroNunoMond} {\sc{D. Mond and J.J.Nu\~{n}o-Ballesteros,}} {\it Singularities of Mappings}, Grundlehren der mathematischen Wissenschaften, No. 357 Springer, (2019).


\bibitem{BallesterosTari} {\sc{J. J. Nu\~{n}o-Ballesteros and F. Tari,}} {\it Surfaces in $\mathbb{R}^{4}$ and their projections to $3$-spaces}. Proc. Roy. Soc. Edinburgh Sect. A, 137 (2007), 1313--1328.


\bibitem{OsetSinhaTari} {\sc{R. Oset Sinha and F. Tari,}} {\it Projections of surfaces in $\mathbb{R}^{4}$ to $\mathbb{R}^{3}$ and the geometry of their singular
   images}. Rev. Mat. Iberoam. 32 (2015), no. 1, 33--50.


\bibitem{Reid} {\sc{M. Reid}} {\it Undergraduate algebraic geometry}. Cambridge University Press, (1988).





\bibitem{wall3}
{\sc Wall, C. T. C.}
\newblock Finite Determinacy of Smooth Map-Germs.
\newblock{\em Bulletin of the London Mathematical Society, 13(6)}, 481-539. (1981).


\bibitem{wall4}
{\sc Wall, C. T. C.}
\newblock Nets of conics.
\newblock {\em Mathematical Proceedings of the Cambridge Philosophical Society. Cambridge University Press, 81(3)}, 351-364. (1977).



\bibitem{wall1}
{\sc Wall, C. T. C.}
\newblock Nets of quadrics, and theta-characteristics of singular curves.
 \newblock {\em Philosophical Transactions for the Royal Society of London. Series A, Mathematical and Physical Sciences}, 229-269.  (1978).


\bibitem{wall2}
{\sc Wall, C. T. C.}
 \newblock  Singularities of nets of quadrics.
 \newblock{\em Compositio Mathematica, 42(2)}, 187-212. (1980).



\end{thebibliography}
\end{document}